\newtheorem{theorem}{Theorem}[section]
\newtheorem{cor}[theorem]{Corollary}
\theoremstyle{definition}
\newtheorem{definition}[theorem]{Definition}
\newtheorem{example}[theorem]{Example}
\theoremstyle{remark}
\numberwithin{equation}{section}
\definecolor{darkblue}{rgb}{0,0,0.7} 
\newcommand{\darkblue}{\color{darkblue}} 
\newcommand{\defn}[1]{\textbf{\darkblue #1}} 
\newcommand{\R}{\mathbb{R}}  
\newcommand{\Pe}{\mathbb{P}} 
\newcommand{\C}{\mathbb{C}} 
\newcommand{\dimn}{\mathrm{dim}} 
\newcommand{\lra}[1]{\mathrel{\mathop{\longrightarrow}^{\mathrm{#1}}}} 
\newcommand{\lmp}[1]{\mathrel{\mathop{\longmapsto}^{\mathrm{#1}}}} 
\newcommand{\hlra}[1]{\mathrel{\mathop{\lhook\joinrel\longrightarrow}^{\mathrm{#1}}}} 
\newcommand{\fiber}[1]{m_Q^{-1}(#1B/B)} 
\newcommand{\Dem}{\text{Dem}} 
\newcommand{\sizeQ}{m} 
\newcommand{\sfr}{{\sf r}} 
\newcommand{\sfw}{{\sf w}} 
\newcommand{\sfB}{{\sf B}} 
\newcommand{\sort}[1]{\mathcal{N}_Q} 
\newcommand{\Root}[2]{{\sfr}(#1,#2)} 
\newcommand{\Weight}[2]{{\sfw}(#1,#2)} 
\newcommand{\splex}[2]{\Delta(#1,#2)} 
\begin{document}


\title{Brick manifolds and toric varieties of brick polytopes} 


\author{Laura Escobar}
\address{Department of Mathematics, Cornell University, 
Ithaca, NY 14850}





\begin{abstract}
Bott-Samelson varieties are a twisted product of $\mathbb{C}\mathbb{P}^1$'s with a map into $G/B$. 
These varieties are mostly studied in the case in which the map into $G/B$ is birational to the image; however in this paper we study a fiber of this map when it is not birational. We will see that in some cases the general fiber, which we christen a brick manifold, is a toric variety. In order to do so we use the moment map of a Bott-Samelson variety to translate this problem into one in terms of the ``subword complexes" of Knutson and Miller. Pilaud and Stump realized certain subword complexes as the dual of the boundary of a polytope which generalizes the brick polytope defined by Pilaud and Santos. 
For a nice family of words, the brick polytope is the generalized associahedron realized by Hohlweg and Lange.
These stories connect in a 
nice way: the moment polytope of the brick manifold is the brick polytope.
In particular, we give a nice description of the toric variety of the associahedron.
We give each brick manifold a stratification dual to the subword complex. 
In addition, we relate brick manifolds to Brion's resolutions of Richardon varieties.

\end{abstract}


 \maketitle


\tableofcontents

\section*{Introduction}

The Bott-Samelson varieties were first defined by Raoul Bott and Hans Samelson in 
\cite{MR0071773}. 
Bott-Samelson varieties are a twisted product of $\mathbb{C}\mathbb{P}^1$'s with a map into $G/B$. 
These varieties have been studied mostly in the case in which the map into $G/B$ is birational. 
In this paper we study some fibers of this map when it is not birational to the image.
We show that for some Bott-Samelson varieties this fiber is a toric variety. 
In order to do so we translate this problem into a purely combinatorial one in terms of subword complexes.
These simplicial complexes $\Delta(Q,w)$ depend on a word $Q$ in the generators of the Weyl group $W$ of $G$ and an element $w\in W$.
They were defined by Allen Knutson and Ezra Miller in \cite{MR2047852} to describe the geometry of determinantal ideals and Schubert polynomials.
In \cite{PS}, Vincent Pilaud and Christian Stump defined the brick polytope and realized certain subword complexes as the boundary of a polytope dual to the brick polytope.
In \cite{CLS} Cesar Ceballos, Jean-Philippe Labb{\'e} and Stump showed that for a nice family of words, the brick polytope is the cluster polytope and for the Weyl group of type A it is an associahedron.
In Theorem \ref{toric} we prove that for the words Pilaud and Stump define as ``realizable", a fiber of the Bott-Samelson map is the toric variety of the brick polytope.
We then get a description of the toric variety of the associahedron in terms of flags arranged in a poset.

Actually the toric case is just a shadow of a more general situation.
We prove in Theorem \ref{moment} that for any word $Q$ and element $w\in W$ the brick polytope is the moment polytope of a fiber of the Bott-Samelson variety.
This motivates us to define the brick manifold as the fiber studied here.
In this paper we show a very nice connection between subword complexes, brick polytopes and brick manifolds. In Theorem \ref{toric} we classify the toric brick manifolds. We end the paper with two results about brick manifolds: we exhibit a stratification of the brick manifolds dual to the subword complex in Theorem \ref{strati} and following \cite{MR2143072}, show that brick manifolds provide resolutions for Richardson varieties in Theorem \ref{richi}.\\

\noindent \textbf{Acknowledgments.} I would like to thank my advisor Allen Knutson sharing this project with me and for all the incredibly fruitful meetings that happen at Cornell University. I also wish to thank Cesar Ceballos for discussions regarding the brick polytope and the subword complex.

\section{Some definitions}

\subsection{Subword complexes}

Let $W$ be the Weyl group of a complex Lie group $G$ with respect to a torus $T$, i.e., $W$ is a crystallographic Coxeter group, and let $S=\{s_i : i\in I\}$ denote its generators.

Some notation: Let $Q=(q_1,\ldots,q_\sizeQ)$ be a \defn{word} in $S$, i.e. an ordered sequence of elements of $S$. A \defn{subword} $J=(r_1,\ldots,r_\sizeQ)$ of $Q$ is a word obtained from $Q$ by replacing some of its letters by $-$. There are a total of $2^{|Q|}$ subwords of $Q$. Given a subword $J$, we denote by $Q\setminus J$ the subword with $k$-th entry equal to $-$ if $r_k\neq -$ and equal to $q_k$ otherwise for $k=1,\ldots,\sizeQ$. For example, $J=(s_1,-,s_3,-,s_2)$ is a subword of $Q=(s_1,s_2,s_3,s_1,s_2)$ and $Q\setminus J=(-,s_2,-,s_1,-)$. Given a subword $J$ we denote by $J_{(k)}$ the product of the leftmost $k$ letters in $J$ with $-$ behaving as the identity, if $k\geq 1$, and $J_{(0)}=1$.

\begin{definition}
 Let $Q=(q_1,\ldots,q_\sizeQ)$ be a word in $S$ and $w\in W$. The \defn{subword complex} $\splex{Q}{w}$ is the simplicial complex on the vertex set $Q$ whose facets are the subwords $F$ of $Q$ such that the product $(Q\setminus F)_{(\sizeQ)}$ is a reduced expression for $w$.
\end{definition}

\begin{example} Let $Q=(s_1,s_2,s_1,s_2,s_1)$ and $w=s_1s_2s_1$, then the simplicial complex $\splex{Q}{w}$ is
\begin{center}
\begin{tikzpicture}
  \newdimen\rad
  \rad=2cm
  \draw (0:\rad)
     \foreach \x in {72,144,...,360} {  -- (\x:\rad) }
              -- cycle (360:\rad) node[right] {$(s_1,s_2,s_1,s_2,-)$}
              -- cycle (288:\rad) node[below right] {$(s_1,s_2,s_1,-,s_1)$}
              -- cycle (216:\rad) node[below left] {$(s_1,s_2,-,s_2,s_1)$}
              -- cycle (144:\rad) node[above left] {$(s_1,-,s_1,s_2,s_1)$}
              -- cycle  (72:\rad) node[above right] {$(-,s_2,s_1,s_2,s_1)$};
	\node (facet15) at (25:2.8cm) {$(-,s_2,s_1,s_2,-)$};
	\node (facet12) at (120:2.3cm) {$(-,-,s_1,s_2,s_1)$};
	\node (facet23) at (180:2.9cm) {$(s_1,-,-,s_2,s_1)$};
	\node (facet34) at (240:2.3cm) {$(s_1,s_2,-,-,s_1)$};
	\node (facet45) at (337:2.8cm) {$(s_1,s_2,s_1,-,-)$};
\end{tikzpicture}\end{center}
In order to make the reduced expression more explicit, we are labeling the faces by their complements.
\end{example}

\begin{definition} We define the \defn{Demazure product} of a word $Q$ inductively as follows:
\begin{itemize}
 \item $\Dem(\text{empty word})=\text{id}$
 \item $\Dem((Q, s))=\begin{cases} \Dem(Q)\cdot s &\text{ if } \ell(\Dem(Q)s)>\ell(\Dem(Q)) \\
\Dem(Q) & \text{ if } \ell(\Dem(Q)s)<\ell(\Dem(Q))\end{cases}$
\end{itemize}
\end{definition}

In \cite{MR2047852} the authors prove that $\splex{Q}{w}$ is a sphere if and only if $\Dem(Q)=w$. In this paper we only consider such pairs. If in addition we assume $Q$ is reduced, then $\splex{Q}{w}=\{\emptyset\}$, the $(-1)$-sphere, so we will not consider reduced $Q$ in this paper.
\subsection{Brick polytopes}\label{brick}

 Let $\Delta(W):=\{\alpha_s : s\in S\}$ be the simple roots of $W$ and let $\nabla(W):=\{\omega_i : s_i\in S\}$ be its fundamental weights. Pilaud and Stump define brick polytopes and study their properties in \cite{PS}. 
 For them, the brick polytope is the convex hull of some conjugates of the fundamental weights of the Weyl group, one per each facet of the subword complex. Our definitions in this section are based on theirs, however we make the brick polytope be the convex hull of the brick vectors corresponding to all the faces in the subword complex such that the product of the complement is $w$. It turns out that the two definitions are equivalent as the proof of Theorem \ref{moment} exhibits.
 
 Given a subword complex $\splex{Q}{w}$ with $|Q|=\sizeQ$ 
define the \defn{root function} \[\Root{J}{\cdot} : \{\text{subwords of }Q\} \to \Delta(W)\]
\begin{equation}\label{other} \Root{J}{k} := (Q\setminus J)_{(k-1)}(\alpha_{q_k})\end{equation} and the \defn{weight function} \[\Weight{J}{\cdot} : \{\text{subwords of }Q\} \to \nabla(W)\]
\begin{equation}\label{otherone} \Weight{J}{k} := (Q\setminus J)_{(k-1)}(\omega_{q_k}).\end{equation}

\begin{definition}
 The \defn{brick vector} of a face $J$ of $\splex{Q}{w}$ is defined by 
 \begin{displaymath}B(J):=\sum_{k\in[\sizeQ]}\Weight{J}{k},\end{displaymath}
and the \defn{brick polytope} is the convex hull of the brick vectors of some faces of $\splex{Q}{w}$
\begin{displaymath}B(Q,w):=\text{conv}\{B(J) : J\in\splex{Q}{w} \text{ and } ( Q\setminus J)_{(\sizeQ)}=w\}.\end{displaymath}
\end{definition}

\begin{definition}\label{rootind}
 A word $Q$ is \defn{root independent} if for some vertex $B(J)$ of $B(Q,w)$ (or all vertices) we have that the multiset $r(J):=\{\{\Root{J}{i} : i\in J\}\}$ is linearly independent.
\end{definition}

Pilaud and Stump in \cite{PS} use the terminology realizing instead of root independent. They show that if $Q$ is root independent, then the brick polytope is dual to the subword complex. One of the main theorems of this paper states that the brick manifold of a word $Q$ is toric with respect to a maximal torus of the Lie group when $Q$ is root independent.

\section{Brick manifolds for $SL_n(\C)$}\label{sec:bsgl}

We start with the case $G=SL_n(\C)$ both because it has beautiful combinatorial pictures and as a motivation to the general complex semi-simple Lie group case.

\subsection{Brick polytopes in the $SL_n(\C)$ case}

The \defn{sorting network} $\sort{Q}$ of a word $Q=(q_1,\ldots,q_{\sizeQ})$ consists of $n$ horizontal lines (called the \defn{levels}) and $m$ vertical segments (called the \defn{commutators}) drawn from left to right so that each commutator joins consecutive levels, no two commutators share a common endpoint, and if $q_k=s_i$ then the $k$-th commutator connects levels $i$ and $i+1$. A \defn{brick} of $\sort{Q}$ is a connected component of its complement, bounded on the left by a commutator.

A \defn{pseudoline} supported by $\sort{Q}$ is a path on $\sort{Q}$ traveling monotonically from left to right. A commutator of $\sort{Q}$ is called a \defn{crossing} between two pseudolines if it is crossed by the two pseudolines and it is called a \defn{contact} otherwise. A \defn{pseudoline arrangement} on $\sort{Q}$ is a collection of $n$ pseudolines such that each two have at most one crossing and no other intersection.

\begin{example} Let $Q=(s_1,s_2,s_1,s_2,s_1)$ then $w_0=\Dem(Q)=s_1s_2s_1=s_2s_1s_2$. The sorting network $\sort{Q}$ is
\begin{center}    
\begin{tikzpicture}
   \path (0,0) coordinate (P7);
   \path (0.5,0) coordinate (P0);
   \path (1,0) coordinate (P1);
   \path (1.5,0) coordinate (P2);
   \path (2,0) coordinate (P3);
   \path (2.5,0) coordinate (P4);
   \path (3,0) coordinate (P5);
   
   \path (0,0.5) coordinate (Q7);
   \path (0.5,0.5) coordinate (Q0);
   \path (1,0.5) coordinate (Q1);
   \path (1.5,0.5) coordinate (Q2);
   \path (2,0.5) coordinate (Q3);
   \path (2.5,0.5) coordinate (Q4);
   \path (3,0.5) coordinate (Q5);

   \path (0,1) coordinate (R7);
   \path (0.5,1) coordinate (R0);
   \path (1,1) coordinate (R1);
   \path (1.5,1) coordinate (R2);
   \path (2,1) coordinate (R3);
   \path (2.5,1) coordinate (R4);
   \path (3,1) coordinate (R5);

   \draw (P7) -- (P0) -- (P1) -- (P2) -- (P3) -- (P4) -- (P5);
   \draw (Q7) -- (Q0) -- (Q1) -- (Q2) -- (Q3) -- (Q4) -- (Q5);
   \draw (R7) -- (R0) -- (R1) -- (R2) -- (R3) -- (R4) -- (R5);
   
   \draw (Q0) -- (P0);
   \draw (Q1) -- (R1);
   \draw (P2) -- (Q2);
   \draw (Q3) -- (R3);
   \draw (P4) -- (Q4);
   
   \end{tikzpicture}
   \end{center}
    and
\begin{center}    
\begin{tikzpicture}
     \path (0,0) coordinate (P7);
   \path (0.5,0) coordinate (P0);
   \path (1,0) coordinate (P1);
   \path (1.5,0) coordinate (P2);
   \path (2,0) coordinate (P3);
   \path (2.5,0) coordinate (P4);
   \path (3,0) coordinate (P5);
   
   \path (0,0.5) coordinate (Q7);
   \path (0.5,0.5) coordinate (Q0);
   \path (1,0.5) coordinate (Q1);
   \path (1.5,0.5) coordinate (Q2);
   \path (2,0.5) coordinate (Q3);
   \path (2.5,0.5) coordinate (Q4);
   \path (3,0.5) coordinate (Q5);

   \path (0,1) coordinate (R7);
   \path (0.5,1) coordinate (R0);
   \path (1,1) coordinate (R1);
   \path (1.5,1) coordinate (R2);
   \path (2,1) coordinate (R3);
   \path (2.5,1) coordinate (R4);
   \path (3,1) coordinate (R5);

   
       \path (0.06,0) coordinate (P7');
   \path (0.56,0) coordinate (P0');
   \path (1.06,0) coordinate (P1');
   \path (1.56,0) coordinate (P2');
   \path (2.06,0) coordinate (P3');
   \path (2.56,0) coordinate (P4');
   \path (3.06,0) coordinate (P5');
   
   \path (0.06,0.5) coordinate (Q7');
   \path (0.56,0.5) coordinate (Q0');
   \path (1.06,0.5) coordinate (Q1');
   \path (1.56,0.5) coordinate (Q2');
   \path (2.06,0.5) coordinate (Q3');
   \path (2.56,0.5) coordinate (Q4');
   \path (3.06,0.5) coordinate (Q5');

   \path (0.06,1) coordinate (R7');
   \path (0.56,1) coordinate (R0');
   \path (1.06,1) coordinate (R1');
   \path (1.56,1) coordinate (R2');
   \path (2.06,1) coordinate (R3');
   \path (2.56,1) coordinate (R4');
   \path (3.06,1) coordinate (R5');
     
     \node (a) at (-0.1,0) {1};
      \node (b) at (-0.1,0.5) {2};
       \node (c) at (-0.1,1) {3};

     \node (a) at (3.1,0) {3};
      \node (b) at (3.1,0.5) {2};
       \node (c) at (3.1,1) {1};
      
   {\color{cyan}\draw[very thick] (P7) -- (P0);}
   {\color{cyan}\draw[very thick] (P0) -- (P1) -- (P2);}
   {\color{magenta}\draw[very thick] (P2') -- (P3) -- (P4) -- (P5);}
   
   {\color{green}\draw[very thick] (Q7) -- (Q0) -- (Q1);}
   {\color{magenta}\draw[very thick] (Q1') -- (Q2');}
   {\color{cyan}\draw[very thick] (Q2) -- (Q3);}
   {\color{green}\draw[very thick] (Q3') -- (Q4) -- (Q5);}
   
   {\color{magenta}\draw[very thick] (R7) -- (R0) -- (R1');}
   {\color{green}\draw[very thick] (R1) -- (R2) -- (R3');}
   {\color{cyan}\draw[very thick] (R3) -- (R4) -- (R5);}
   
   \draw[dashed] (Q0) -- (P0);
   {\color{green}\draw[very thick] (Q1) -- (R1);}
   {\color{magenta}\draw[very thick] (Q1') -- (R1');}
   {\color{cyan}\draw[very thick] (P2) -- (Q2);}
   {\color{magenta}\draw[very thick] (P2') -- (Q2');}
   {\color{cyan}\draw[very thick] (Q3) -- (R3);}
      {\color{green}\draw[very thick] (Q3') -- (R3');}
   \draw[dashed] (P4) -- (Q4);
 
   \end{tikzpicture}
   \end{center}
   is a pseudoline arrangement on $\sort{Q}$.
\end{example}

Given a pseudoline arrangement supported by $\sort{Q}$, if we let $J=(r_1,\ldots,r_\sizeQ)$ be the subword of $Q$ with $r_i\neq -$ precisely when there is a contact at the $i$-th commutator, then the product $w=(Q\setminus J)_{(\sizeQ)}$ is an element of $W$ and the pseudoline ending on the right at level $i$ will start on the left at level $w(i)$. We call such an arrangement a \defn{$w$-pseudoline arrangement}. There is a one-to-one correspondence between faces $J$ of $\splex{Q}{w}$ and $\left( Q\setminus J\right)_{(\sizeQ)}$-pseudoline arrangements supported by $\sort{Q}$. The pseudoline arrangement in the previous example corresponds to the subword $J=(s_1,-,-,-,s_1)$.
 In this setup, we have that $w(J,j)$ is the characteristic vector of the pseudolines passing below the $j$-th brick of $\sort{Q}$. Moreover, the $i$-th coordinate of the \defn{brick vector} $B(J)$ is the number of bricks in $\sort{Q}$ that lie above the $i$-th pseudoline with contacts $J$, and the brick polytope $B(Q,w)$ is the following convex hull:
$$B(Q,w):=\text{conv}\{B(J) : J\in\splex{Q}{w} \text{ and } (Q\setminus J)_{(\sizeQ)}=w\}.$$

\begin{example}\label{pent}
 Let $Q=(s_1,s_2,s_1,s_2,s_1)$ and $w=s_1s_2s_1$. Then the pseudoline arrangement corresponding to the subword $J=(s_1,-,-,-,s_1)$ gives the vector $B(J)=(2,1,4)$ obtained by counting bricks above each line. The brick polytope $B(Q,w)$ is pictured below.
     
 \begin{center}  
\begin{tikzpicture}
\coordinate (0;0) at (0,0); 
\foreach \c in {1,...,3}{%
\foreach \i in {0,...,5}{%
\pgfmathtruncatemacro\j{\c*\i}
\coordinate (\c;\j) at (60*\i:\c);  
} }
\foreach \i in {0,2,...,10}{%
\pgfmathtruncatemacro\j{mod(\i+2,12)} 
\pgfmathtruncatemacro\k{\i+1}
\coordinate (2;\k) at ($(2;\i)!.5!(2;\j)$) ;}

\foreach \i in {0,3,...,15}{%
\pgfmathtruncatemacro\j{mod(\i+3,18)} 
\pgfmathtruncatemacro\k{\i+1} 
\pgfmathtruncatemacro\l{\i+2}
\coordinate (3;\k) at ($(3;\i)!1/3!(3;\j)$)  ;
\coordinate (3;\l) at ($(3;\i)!2/3!(3;\j)$)  ;
 }

 \draw (3;7)--(2;9)--(3;16)--(3;0)--(3;2)--(3;7);
    
\fill (3;7) circle (2pt);
\fill (2;9) circle (2pt);
\fill (3;16) circle (2pt);
\fill (3;0) circle (2pt);
\fill (3;2) circle (2pt);
\fill (3;7) circle (2pt);

\node (b) at (40:3.3) {$B(-,-,-,s_2,s_1)=(0,3,4)$};
\node (a) at (135:3) {$B(s_1,-,-,-,s_1)=(2,1,4)$};
\node (e) at (330:4) {$B(s_1,s_2,-,-,-)=(2,3,2)$};
\node (c) at (60*3:-5.2) {$B(-,-,s_1,s_2,-)=(0,4,3)$};
\node (d) at (210:3) {$B(-,s_2,s_1,-,-)=(1,4,2)$};
\end{tikzpicture} 
\end{center}
For more pictures of brick polytopes of various $Q$ and $w$, see \cite{PS}.\end{example}

A purpose of this paper is to assign geometry to these polytopes. To do so, we use the Bott-Samelson varieties which we define in the following section.

\subsection{Definition of Bott-Samelson varieties for $SL_n(\C)$}\label{slnbs}

Let $G=SL_n(\C)$ and fix an ordered basis for $\C^n$. Let $B$ be the subgroup $SL_n(\C)$ consisting of upper triangular matrices with respect to this basis. We then get an ascending flag of $B$-invariant vector spaces
\begin{displaymath}\langle e_1\rangle\subset \cdots\subset \langle e_1,\ldots,e_n\rangle,\end{displaymath}
which we refer to as the \defn{base flag}.
Let $T$ be the subgroup consisting of all diagonal matrices in $G$, so $T$ is a maximal torus contained in $B$.
Let $P_i$ be the minimal parabolic subgroup consisting of all matrices that are upper triangular except possibly at the position $(i+1,i)$. 
The quotient $G/B$ is the flag variety, that is, the space of flags $\{0\}\subset V_1\subset \cdots\subset V_n=\C^n$ where each $V_i$ is an $i$-dimensional vector space. Moreover, the Weyl group of $G$ is $W=A_{n-1}$ with generators $S=\{s_1,\ldots,s_{n-1}\}$. The fundamental weights are $\nabla(W)=\{\omega_i:i=1,\ldots,n-1\}$ where the first $i$ entries of $\omega_i$ are 1 and the rest are 0.

We begin the definition of $BS^Q$ with an example.
\begin{example}
Let $G=SL_3(\C)$ and $Q=(s_1,s_2,s_1,s_2,s_1)$. Then the Bott-Samelson variety $BS^Q$ is constructed by starting with the base flag and then iteratively reading the word from left to right: if the $k$-th letter of $Q$ is $s_i$, we have an $i$-th dimensional vector space $V_k$  such that $V_{k-1}\subset V_k\subset V_{k+1}$.
In this example we have that 
\begin{displaymath} BS^Q=\{(V_1,V_2,V_3,V_4,V_5) : \text{ the diagram below holds}\}\end{displaymath}

\begin{center}
  \begin{tikzpicture}
\node (top) at (0,0) {$\C^3$};
    \node [below of=top] (flag2)  {$\langle e_1,e_2\rangle$};
    \node [below of=flag2] (flag1) {$\langle e_1\rangle$};
    \node [right of=flag1] (L1) {$V_1$};
        \node [right of=flag2] (P1) {$V_2$};
        \node [right of=L1] (L2) {$V_3$};
\node [right of=P1] (P2) {$V_4$};
\node [right of=L2] (L3) {$V_5$};
\node [below of=flag1] (down) {0};
\draw [thick,shorten <=-2pt] (top) -- (flag2);
\draw [thick,shorten <=-2pt] (flag2) -- (flag1);
\draw [thick,shorten <=-2pt] (P1) -- (L1);
\draw [thick,shorten <=-2pt] (P1) -- (L2);
\draw [thick,shorten <=-2pt] (P2) -- (L2);
\draw [thick,shorten <=-2pt] (P2) -- (L3);
\draw [thick,shorten <=-2pt] (top) -- (P1);
\draw [thick,shorten <=-2pt] (top) -- (P2);
\draw [thick,shorten <=-2pt] (flag2) -- (L1);
\draw [thick,shorten <=-2pt] (flag1) -- (down);
\draw [thick,shorten <=-2pt] (L1) -- (down);
\draw [thick,shorten <=-2pt] (L2) -- (down);
\draw [thick,shorten <=-2pt] (L3) -- (down);
\end{tikzpicture}
\end{center}
\end{example}

More generally, if $Q=(q_1,\ldots,q_m)$ then $BS^Q$ consists of a list of $m+1$ flags where the zeroth one is the base flag and such that the $k$-th one agrees with the previous one except possibly on the $k$-th subspace $V_k$. We can give a point in $BS^Q$ by giving the subspaces $(V_1,\ldots,V_m)$ such that the incidence relations given by the flags hold. This carries a $B$-action, and the map $\displaystyle{BS^Q\lra{m_Q}G/B}$ mapping the list to the last flag is $B$-equivariant.

\begin{example} Continuing with the previous example, we have that \[m_Q:BS^{(s_1,s_2,s_1,s_2,s_1)}\rightarrow G/B\] is the map
\begin{center}
  \begin{tikzpicture}
    {\color{red}\node (top) at (0,0) {$\C^3$};}
    \node [below of=top] (flag2)  {$\langle e_1,e_2\rangle$};
    \node [below of=flag2] (flag1) {$\langle e_1\rangle$};
    \node [right of=flag1] (L1) {$L_1$};
        \node [right of=flag2] (P1) {$P_1$};
        \node [right of=L1] (L2) {$L_2$};
        {\color{red}\node [right of=P1] (P2) {$P_2$};}
        {\color{red}\node [right of=L2] (L3) {$L_3$};}
        {\color{red}\node [below of=flag1] (down) {0};}
\draw [thick,shorten <=-2pt] (top) -- (flag2);
\draw [thick,shorten <=-2pt] (flag2) -- (flag1);
\draw [thick,shorten <=-2pt] (P1) -- (L1);
\draw [thick,shorten <=-2pt] (P1) -- (L2);
\draw [thick,shorten <=-2pt] (P2) -- (L2);
{\color{red}\draw [thick,shorten <=-2pt] (P2) -- (L3);}
\draw [thick,shorten <=-2pt] (top) -- (P1);
{\color{red}\draw [thick,shorten <=-2pt] (top) -- (P2);}
\draw [thick,shorten <=-2pt] (flag2) -- (L1);
\draw [thick,shorten <=-2pt] (flag1) -- (down);
\draw [thick,shorten <=-2pt] (L1) -- (down);
\draw [thick,shorten <=-2pt] (L2) -- (down);
{\color{red}\draw [thick,shorten <=-2pt] (L3) -- (down);}

\node (map) at (4,-1.5) {$\longmapsto$};

        {\color{red}    \node (mtop) at (5.2,0) {$\C^3$};}
        {\color{red}    \node [below of=mtop] (mP2)  {$P_2$};}
        {\color{red}    \node [below of=mP2] (mL3) {$L_3$};}
        {\color{red}    \node [below of=mL3] (mdown) {0};}
        {\color{red}\draw [thick,shorten <=-2pt] (mtop) -- (mP2);}
        {\color{red}\draw [thick,shorten <=-2pt] (mP2) -- (mL3);}
        {\color{red}\draw [thick,shorten <=-2pt] (mL3) -- (mdown);}
\end{tikzpicture}  
\end{center}
\end{example}

We now define the main object of study in this paper.
\begin{definition} Let $Q=(q_1,\ldots,q_\sizeQ)$ be a word in the generators of $W$ and $w=\Dem(Q)$, then the \defn{brick manifold} is the fiber $\fiber{w}$. 
\end{definition}

Note that the $B$-action restricted to $T$ is just the extension to $BS^Q$ of $T$ acting on $\C^n$ by multiplication. There is a 1-1 correspondence between $T$-fixed points on $BS^Q$ and subwords $J$ of $Q$ such that if $p(J)$ is the $T$-fixed point corresponding to $J$ then $m_Q(p(J))=\left( Q\setminus J\right)_{(\sizeQ)}B/B\in G/B$. The point \defn{$p(J)$ corresponding to the subword $J=(r_1,\ldots,r_\sizeQ)$} is determined by deciding between $=$ and $\neq$ in each diamond
\begin{center}
 \begin{tikzpicture}
    \node (top) at (0,0) {$V_b=V_a\bigoplus\langle e_x,e_y \rangle$};
    \node [below of=top] (nothing)  {$=,\neq$};
    \node[left of=nothing] (new) {};
    \node [left of=new] (left) {$V_i=V_a\bigoplus\langle e_x\rangle$};
    \node [right of=nothing] (right) {$V_j$};
        \node [below of=nothing] (down) {$V_a$};
\draw [thick,shorten <=-2pt] (top) -- (new);
\draw [thick,shorten <=-2pt] (top) -- (right);
\draw [thick,shorten <=-2pt] (right) -- (down);
\draw [thick,shorten <=-2pt] (new) -- (down);
\end{tikzpicture}
\end{center}
using the rule: for $Q=(q_1,\ldots,q_m)$, we pick ``$=$" if $r_j=q_j$ and ``$\neq$" if $r_j=-$. We illustrate this correspondence by an example.

\begin{example} 
 The subword $J=(-,s_2,-,-,s_1)$ of $Q=(s_1,s_2,s_1,s_2,s_1)$ corresponds to the coordinate flags
   
\begin{center}
   \begin{tikzpicture}
    \node (top) at (0,0) {$\C^3$};
    \node [below of=top] (flag2)  {$\langle e_1,e_2\rangle$};
    \node [below of=flag2] (flag1) {$\langle e_1\rangle$};
          \node [right of=flag1](1) {$\neq$};
                \node [right of=flag2](2) {$=$};
    \node [right of=1] (L1) {$\langle e_2\rangle$};
        \node [right of=2] (P1) {$\langle e_1,e_2\rangle$};
                \node[right of=L1] (3) {$\neq$};
        \node [right of=P1](4)  {$\neq$};
        \node [right of=3] (L2) {$\langle e_1\rangle$};
        \node [right of=4] (P2) {$\langle e_1,e_3\rangle$};
        \node [right of=L2] (5) {$=$}; 
        \node [right of=5] (L3) {$\langle e_1\rangle$};
        \node [below of=flag1] (down) {0};
\draw [thick,shorten <=-2pt] (top) -- (flag2);
\draw [thick,shorten <=-2pt] (flag2) -- (flag1);
\draw [thick,shorten <=-2pt] (P1) -- (L1);
\draw [thick,shorten <=-2pt] (P1) -- (L2);
\draw [thick,shorten <=-2pt] (P2) -- (L2);
\draw [thick,shorten <=-2pt] (P2) -- (L3);
\draw [thick,shorten <=-2pt] (top) -- (P1);
\draw [thick,shorten <=-2pt] (top) -- (P2);
\draw [thick,shorten <=-2pt] (flag2) -- (L1);
\draw [thick,shorten <=-2pt] (flag1) -- (down);
\draw [thick,shorten <=-2pt] (L1) -- (down);
\draw [thick,shorten <=-2pt] (L2) -- (down);
\draw [thick,shorten <=-2pt] (L3) -- (down);
\end{tikzpicture}  
\end{center}
and its image under $m_Q:BS^Q\rightarrow G/B$ is $(Q\setminus J)_{(\sizeQ)}B=(s_1s_1s_2)B=(s_2)B$.
  \end{example}

This correspondence motivates the relation between fibers of the map $m_Q$ and subword complexes. The main tool connecting brick polytopes with fibers of Bott-Samelson varieties will be moment maps of symplectic manifolds. We will discuss the symplectic manifold structure on general $BS^Q$ in Section \ref{symp}. Namely, we will show that Bott-Samelson varieties are Hamiltonian symplectic manifolds with respect to the torus action described above. Therefore, a Bott-Samelson variety comes equipped with a \defn{moment map} associated to the torus action. The image of this map is the \defn{moment polytope} and it equals the convex hull of the images of the $T$-fixed points. Every toric variety is a Hamiltonian symplectic manifold with respect to the torus action. Moreover, if $X$ is the toric variety associated to a Delzant polytope $P$ then the image of the moment map is the polytope $P$. 

In order to motivate latter sections and, more importantly, to be able to state the theorem connecting Bott-Samelson varieties and brick polytopes, we now describe the moment map of $BS^Q$ for the current case of interest, $G=SL_n(\C)$. The moment map is a map
\begin{displaymath}\mu: BS^Q\longrightarrow \R\langle\nabla(W)\rangle,\end{displaymath}
 where $\R\langle\nabla(W)\rangle$ is the real span of the fundamental weights of $W$. Let $\pi_V:\C^n\rightarrow V$ denote the orthogonal projection onto $V$ and let $P_V$ be the corresponding matrix with respect to the basis $e_1,\ldots,e_n$. Given $p=(V_1,\ldots,V_m)\in BS^Q$ the moment map is 
\begin{align*}
BS^Q &\lra{\mu} \R^n \\
 (V_1,\ldots,V_m) &\lmp{\mu}\sum_{i=1}^m\text{diag}(P_{V_i}).
 \end{align*}

In the following section we give a precise statement about the relation between brick polytopes and Bott-Samelson varieties.

\subsection{Toric varieties for brick polytopes in the $SL_n(\C)$ case}

Recall from section \ref{slnbs} that subwords $J$ of $Q$ are in bijective correspondence with $T$-fixed points of $BS^Q$, and that if $p(J)$ is the point corresponding to $J$, as defined in the previous section, then $m_Q(p(J))=\left(Q\setminus J\right)_{(\sizeQ)}B/B\in G/B$, where $\sizeQ=|Q|$.
 This means that the rightmost flag of the configuration $p(J)$ is the flag corresponding to $(Q\setminus J)_{(\sizeQ)}\in W$ and so the pseudoline arrangement corresponding to $S$ is an $\left(Q\setminus S\right)_{(\sizeQ)}$-arrangement. The following example shows the correspondence.

\begin{example}
The pseudoline arrangement corresponding to the subword $J=(s_1,-,-,-,s_1)$ gives a $T$-fixed point of $BS^{(s_1,s_2,s_1,s_2,s_1)}$. The diagram below exhibits this correspondence. Each brick of the sorting network corresponds to a coordinate subspace of a point in the Bott-Samelson variety. Given a pseudoline arrangement supported in the sorting network of $Q$, the $j$-th subspace corresponding to the $j$-th brick is the coordinate subspace spanned by the $e_i$ where $i$ ranges over those pseudolines passing below the $j$-th brick. Note then that two bricks share a crossing if and only if the corresponding coordinate spaces are equal. This will be proven in the theorem that follows.

\scalebox{0.8}{
\begin{tikzpicture}
     \path (0,-2) coordinate (P7);
   \path (1,-2) coordinate (P0);
   \path (2,-2) coordinate (P1);
   \path (3,-2) coordinate (P2);
   \path (4,-2) coordinate (P3);
   \path (5,-2) coordinate (P4);
   \path (6,-2) coordinate (P5);
   
   \path (0,-1.5) coordinate (Q7);
   \path (1,-1.5) coordinate (Q0);
   \path (2,-1.5) coordinate (Q1);
   \path (3,-1.5) coordinate (Q2);
   \path (4,-1.5) coordinate (Q3);
   \path (5,-1.5) coordinate (Q4);
   \path (6,-1.5) coordinate (Q5);

   \path (0,-1) coordinate (R7);
   \path (1,-1) coordinate (R0);
   \path (2,-1) coordinate (R1);
   \path (3,-1) coordinate (R2);
   \path (4,-1) coordinate (R3);
   \path (5,-1) coordinate (R4);
   \path (6,-1) coordinate (R5);
   
   
    \path (0.1,-2) coordinate (P7');
   \path (1.1,-2) coordinate (P0');
   \path (2.1,-2) coordinate (P1');
   \path (3.1,-2) coordinate (P2');
   \path (4.1,-2) coordinate (P3');
   \path (5.1,-2) coordinate (P4');
   \path (6.1,-2) coordinate (P5');
   
   \path (0.1,-1.5) coordinate (Q7');
   \path (1.1,-1.5) coordinate (Q0');
   \path (2.1,-1.5) coordinate (Q1');
   \path (3.1,-1.5) coordinate (Q2');
   \path (4.1,-1.5) coordinate (Q3');
   \path (5.1,-1.5) coordinate (Q4');
   \path (6.1,-1.5) coordinate (Q5');

   \path (0.1,-1) coordinate (R7');
   \path (1.1,-1) coordinate (R0');
   \path (2.1,-1) coordinate (R1');
   \path (3.1,-1) coordinate (R2');
   \path (4.1,-1) coordinate (R3');
   \path (5.1,-1) coordinate (R4');
   \path (6.1,-1) coordinate (R5');
         
   {\color{cyan}\draw[very thick] (P7) -- (P0);}
   {\color{cyan}\draw[very thick] (P0) -- (P1) -- (P2);}
   {\color{magenta}\draw[very thick] (P2') -- (P3) -- (P4) -- (P5);}
   
   {\color{green}\draw[very thick] (Q7) -- (Q0) -- (Q1);}
   {\color{magenta}\draw[very thick] (Q1') -- (Q2');}
   {\color{cyan}\draw[very thick] (Q2) -- (Q3);}
   {\color{green}\draw[very thick] (Q3') -- (Q4) -- (Q5);}
   
   {\color{magenta}\draw[very thick] (R7) -- (R0) -- (R1');}
   {\color{green}\draw[very thick] (R1) -- (R2) -- (R3');}
   {\color{cyan}\draw[very thick] (R3) -- (R4) -- (R5);}
   
   \draw[dashed] (Q0) -- (P0);
   {\color{green}\draw[very thick] (Q1) -- (R1);}
   {\color{magenta}\draw[very thick] (Q1') -- (R1');}
   {\color{cyan}\draw[very thick] (P2) -- (Q2);}
   {\color{magenta}\draw[very thick] (P2') -- (Q2');}
   {\color{cyan}\draw[very thick] (Q3) -- (R3);}
      {\color{green}\draw[very thick] (Q3') -- (R3');}
   \draw[dashed] (P4) -- (Q4);

\node at (0.8,-1.75) {$=$};
\node at (4.8,-1.75) {$=$};
\node at (2.8,-1.75) {$\neq$};

\node at (0.3,-1.75) {$\langle e_1 \rangle$};
\node at (2,-1.75) {$\langle e_1 \rangle$};
\node at (4,-1.75) {$\langle e_3 \rangle$};
\node at (5.5,-1.75) {$\langle e_3 \rangle$};

\node at (1.8,-1.25) {$\neq$};
\node at (3.8,-1.25) {$\neq$};

\node at (1,-1.25) {$\langle e_1,e_2 \rangle$};
\node at (3,-1.25) {$\langle e_1,e_3 \rangle$};
\node at (5,-1.25) {$\langle e_2,e_3 \rangle$};

    \node (top) at (8,0) {$\C^3$};
    \node [below of=top] (flag2)  {$\langle e_1,e_2\rangle$};
    \node [below of=flag2] (flag1) {$\langle e_1\rangle$};
          \node [right of=flag1](1) {$=$};
                \node [right of=flag2](2) {$\neq$};
    \node [right of=1] (L1) {$\langle e_1\rangle$};
        \node [right of=2] (P1) {$\langle e_1,e_3\rangle$};
                \node[right of=L1] (3) {$\neq$};
        \node [right of=P1](4)  {$\neq$};
        \node [right of=3] (L2) {$\langle e_3\rangle$};
        \node [right of=4] (P2) {$\langle e_2,e_3\rangle$};
        \node [right of=L2] (5) {$=$}; 
        \node [right of=5] (L3) {$\langle e_3\rangle$};
        \node [below of=flag1] (down) {0};
\draw [thick,shorten <=-2pt] (top) -- (flag2);
\draw [thick,shorten <=-2pt] (flag2) -- (flag1);
\draw [thick,shorten <=-2pt] (P1) -- (L1);
\draw [thick,shorten <=-2pt] (P1) -- (L2);
\draw [thick,shorten <=-2pt] (P2) -- (L2);
\draw [thick,shorten <=-2pt] (P2) -- (L3);
\draw [thick,shorten <=-2pt] (top) -- (P1);x
\draw [thick,shorten <=-2pt] (top) -- (P2);
\draw [thick,shorten <=-2pt] (flag2) -- (L1);
\draw [thick,shorten <=-2pt] (flag1) -- (down);
\draw [thick,shorten <=-2pt] (L1) -- (down);
\draw [thick,shorten <=-2pt] (L2) -- (down);
\draw [thick,shorten <=-2pt] (L3) -- (down);
\end{tikzpicture}}
\end{example}

\begin{theorem} Suppose $\Dem(Q)=w$. There is a bijective correspondence between $w$-pseudoline arrangements supported by $\sort{Q}$ and $T$-fixed points of $\fiber{w}$.
 Moreover, this correspondence makes the composite map
\begin{displaymath} \fiber{w}^T\hookrightarrow \fiber{w}\lra{\mu}\R^n\end{displaymath}
be equivalent to the mapping
\begin{displaymath}B:\{w\text{-pseudoline arrangements supported by } \sort{Q}\}\longrightarrow\R^n\end{displaymath}
given in \cite{MR2864447}.
\end{theorem}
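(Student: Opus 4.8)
The plan is to reduce everything to the combinatorics already in place: identify the $T$-fixed points of the fiber with a distinguished family of subwords, match that family with $w$-pseudoline arrangements via the correspondence recorded above, and then evaluate the moment map on these fixed points and check that it reproduces the brick vector.

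First I would pin down $\fiber{w}^T$. Since $T\subset B$ and $m_Q$ is $B$-equivariant, $m_Q$ is $T$-equivariant; as $wB/B$ is a $T$-fixed point of $G/B$, the fiber $\fiber{w}$ is $T$-stable and $\fiber{w}^T=\{p\in (BS^Q)^T : m_Q(p)=wB/B\}$. Feeding in the bijection between $T$-fixed points of $BS^Q$ and subwords $J$ of $Q$, together with $m_Q(p(J))=(Q\setminus J)_{(\sizeQ)}B/B$ and the fact that distinct elements of $W$ index distinct $T$-fixed flags, this says $p(J)\in\fiber{w}^T$ precisely when $(Q\setminus J)_{(\sizeQ)}=w$. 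By the correspondence between subwords of $Q$ and pseudoline arrangements supported by $\sort{Q}$ --- under which $J$ produces a $(Q\setminus J)_{(\sizeQ)}$-pseudoline arrangement --- these are exactly the $w$-pseudoline arrangements. Composing the two bijections yields the asserted correspondence.

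For the ``moreover'' part I would compute $\mu$ directly on $p(J)=(V_1,\ldots,V_\sizeQ)$. Each $V_i$ is a coordinate subspace, so $P_{V_i}$ is a diagonal $0/1$ matrix and $\text{diag}(P_{V_i})$ is the indicator vector of the basis vectors spanning $V_i$; hence the $a$-th coordinate of $\mu(p(J))=\sum_{i=1}^{\sizeQ}\text{diag}(P_{V_i})$ counts the indices $i$ with $e_a\in V_i$. The geometric heart is then the dictionary
\[
V_i=\langle e_a : \text{pseudoline } a \text{ passes below the } i\text{-th brick of } \sort{Q}\rangle,
\]
which I would establish by induction on $i$, reading $Q$ from left to right: a contact ($=$, a letter of $J$) leaves the relevant below-set unchanged, while a crossing ($\neq$) swaps the two levels meeting at that commutator, exactly mirroring the update of the coordinate subspace by the rule defining $p(J)$. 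Granting the dictionary, $e_a\in V_i$ holds exactly when the $i$-th brick lies above pseudoline $a$, so the $a$-th coordinate of $\mu(p(J))$ equals the number of bricks above the $a$-th pseudoline, which is the $a$-th coordinate of $B(J)=\sum_k\Weight{J}{k}$. Thus $\mu(p(J))=B(J)$ and the composite $\fiber{w}^T\hookrightarrow\fiber{w}\lra{\mu}\R^n$ is the brick-vector map of \cite{MR2864447}.

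I expect this dictionary to be the main obstacle. It requires fixing the bijection between the $\sizeQ$ commutators (equivalently the subspaces $V_i$) and the bricks of $\sort{Q}$, and then verifying that the ``below a brick'' set evolves under contacts and crossings in lockstep with the prescribed coordinate subspaces. Once the inductive step is set up, matching a crossing with right multiplication by the simple reflection that transposes the two basis vectors $e_x,e_y$ of the diamond, and a contact with ``no change'', is routine; the care lies in bookkeeping the levels and confirming the dimension count, namely that a commutator labelled $s_i$ bounds a brick with exactly $i$ pseudolines below it, matching $\dimn V_i=i$. The example immediately preceding the theorem already exhibits this dictionary concretely and serves as the template for the induction.
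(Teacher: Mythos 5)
Your proposal is correct and is essentially the paper's own argument: the first part follows from the subword/$T$-fixed-point correspondence exactly as you describe, and the second part is proved by induction along the word, identifying each coordinate subspace $V_i$ with the span of the $e_a$ for pseudolines $a$ passing below the $i$-th brick, whence $\mu(p(J))=B(J)$. The only cosmetic difference is that the paper organizes the induction by peeling off the last letter of $Q$ and using that the rightmost flag of $p(J)$ is the coordinate flag of $w=(Q\setminus J)_{(m)}$ to identify the increment, while you scan left to right matching contacts and crossings with the $=$/$\neq$ rule; the content is the same.
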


\begin{proof}
The first part of the proposition is proven in the paragraph preceding the example above. We prove the second part of this theorem using induction on $|Q|=\sizeQ$ to prove that $\mu(p(J))=\sfB(J)$ for all subwords $J$, where $p(J)=(V_1,\ldots,V_\sizeQ)$ is the point in $BS^Q$ corresponding to $J$. Let $Q=(q_1,\ldots,q_{\sizeQ+1})$. Recall that the rightmost flag of the fixed point $p(J)$ corresponding to the subword $J$ is 
\begin{center}
   \begin{tikzpicture}
    \node (top) at (0,0) {$\langle e_{w(1)},\ldots,e_{w(n)}\rangle=\C^n$};
    \node [below of=top] (mid) {$\vdots$};
    \node [below of=mid] (flag2)  {$\langle e_{w(1)},e_{w(2)}\rangle$};
    \node [below of=flag2] (flag1) {$\langle e_{w(1)}\rangle$};
        \node [below of=flag1] (down) {0};
\draw [thick,shorten <=-2pt] (top) -- (mid);
\draw [thick,shorten <=-2pt] (mid) -- (flag2);
\draw [thick,shorten <=-2pt] (flag2) -- (flag1);
\draw [thick,shorten <=-2pt] (flag1) -- (down);
\end{tikzpicture}  
\end{center}
where $w=(Q\setminus J)_{(\sizeQ+1)}$. Let $J$ be a subword of $Q$ and consider the words $Q'=(q_1,\ldots,q_\sizeQ)$ and $J'=(j_1,\ldots,j_\sizeQ)$. By induction we have that $\mu(p(J'))=\sfB(J')$. Now notice that 
\begin{align*}\mu(p(J))&=\mu(p(J'))+(\dimn_{e_1}(V_{k+1}),\ldots,\dimn_{e_n}(V_{k+1}))\\
&=\mu(p(J'))+w\cdot(1,\ldots,1,0,\ldots,0),\end{align*}
where the 0-1 vector has as many ones as $\dimn(V_{k+1})$. The vector $w\cdot(1,\ldots,1,0,\ldots,0)$ adds one to the $i$-th coordinate if and only if the brick corresponding to the commutator $q_{k+1}$ is above the $i$-th pseudoline.
\end{proof}

\begin{theorem} Let $w=\Dem(Q)$. The fiber $\fiber{w}$ is a toric variety with respect to the torus $T$ if and only if $Q$ is root independent and $\ell(w)<|Q|\leq\ell(w)+\dimn(T)$. Moreover, $\fiber{w}$ is the toric variety associated to the polytope $B(Q,w)$.
\end{theorem}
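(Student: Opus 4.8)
The plan is to reduce the toricity of $\fiber{w}$ to the single numerical identity $\dim B(Q,w)=\dim_{\C}\fiber{w}$ and then to read off both sides from the combinatorics of the brick polytope. I first compute the fiber dimension. Since $\Dem(Q)=w$, the image of $m_Q$ is the Schubert variety $\overline{BwB/B}$, of dimension $\ell(w)$, while $BS^Q$ is smooth of dimension $|Q|$, and $wB/B$ lies in the open cell $BwB/B$. As $m_Q$ is $B$-equivariant and $B$ acts transitively on this cell, all fibers of $m_Q$ over it are isomorphic; a generic one is smooth of dimension $|Q|-\ell(w)$ by generic smoothness in characteristic zero. Hence $\fiber{w}$ is a smooth, connected projective variety of dimension $|Q|-\ell(w)$ (the fibers of $m_Q$ are connected). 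The lower bound $\ell(w)<|Q|$ is exactly the standing hypothesis that $Q$ is not reduced, i.e. that this dimension is positive.

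As a smooth complex subvariety of the projective variety $BS^Q$, the fiber $\fiber{w}$ is a compact symplectic manifold for the restricted K\"ahler form, on which $T$ acts in a Hamiltonian fashion with moment map the restriction of $\mu$. By the Atiyah--Guillemin--Sternberg convexity theorem $\mu(\fiber{w})$ is the convex hull of the images of the $T$-fixed points, and by the preceding theorem these images are precisely the brick vectors $B(J)$ with $(Q\setminus J)_{(\sizeQ)}=w$; thus the moment polytope of $\fiber{w}$ equals $B(Q,w)$. I then use the standard fact that for a compact connected Hamiltonian $T$-manifold the dimension of the moment polytope equals the dimension of the quotient of $T$ that acts effectively, and that the manifold is toric with respect to $T$ exactly when this effective torus has dimension $\dim_{\C}\fiber{w}$ (which forces $\dim_{\C}\fiber{w}\le\dim T$). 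Combining the two steps, $\fiber{w}$ is toric if and only if $\dim B(Q,w)=|Q|-\ell(w)$.

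It remains to match this equality with root independence and to recover the upper bound. Here I invoke the description of Pilaud and Stump that the edges of $B(Q,w)$ emanating from a vertex $B(J)$ point in the directions of the roots $\Root{J}{i}$, $i\in J$, so that $\dim B(Q,w)=\dim\operatorname{span} r(J)$ for every vertex $J$. This span attains its maximal value $|J|=|Q|-\ell(w)$ precisely when the multiset $r(J)$ is linearly independent, that is, precisely when $Q$ is root independent. Since the roots lie in a space of dimension $\dim T$, root independence automatically yields $|Q|-\ell(w)\le\dim T$, recovering the upper bound $|Q|\le\ell(w)+\dim T$. Finally, once $\fiber{w}$ is known to be a smooth compact toric manifold its moment polytope is automatically Delzant, so Delzant's theorem identifies $\fiber{w}$ with the toric variety associated to $B(Q,w)$.

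The step I expect to be the main obstacle is the clean passage between the geometric condition that the moment polytope be full-dimensional and the combinatorial condition of root independence; this is exactly where the Pilaud--Stump duality between the brick polytope and the subword complex must be used, and care is needed because root independence is phrased at a single (equivalently, every) vertex whereas toricity is a global property of $\fiber{w}$.
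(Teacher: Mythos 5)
Your proposal is correct and follows essentially the same route as the paper's proof: both reduce toricity of $\fiber{w}$ to the condition that the moment polytope, which equals $B(Q,w)$ by Theorem \ref{moment} and Atiyah--Guillemin--Sternberg, has dimension equal to $\dimn(\fiber{w})=|Q|-\ell(w)$ --- equivalently, that $T$ has no generic stabilizer of positive dimension after possibly shrinking the torus --- and then translate this dimension condition into root independence via Pilaud--Stump. The differences are only ones of completeness: you spell out the flip-lemma argument giving $\dimn B(Q,w)=\dimn\operatorname{span}(r(J))$ and you supply the Delzant-theorem step identifying $\fiber{w}$ with the toric variety of $B(Q,w)$, both of which the paper's very terse proof leaves implicit.
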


We have proved the if part of this theorem; however the only if part will follow from Theorem \ref{toric}. 
The following corollary follows from the work of Pilaud and Santos in \cite{MR2864447}. 
We define a \defn{Coxeter element} $c$ to be the product of all simple reflections in some order using each reflection only once. 
Define the \defn{$\textbf{c}$-sorting word} of $w$ to be the lexicographically first subword of $\textbf{c}^{\infty}$ that is a reduced expression for $w$.

\begin{cor}
 If $Q$ is the concatenation of a word $\textbf{c}$ representing a Coxeter element $c$ and the $\textbf{c}$-sorting word for $w_0$, then $\fiber{w_0}$ is the toric variety of the associahedron as realized in \cite{MR2321739} and in \cite{MR2864447}.
\end{cor}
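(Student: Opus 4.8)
The plan is to obtain this corollary as a direct application of the toric classification in Theorem~\ref{toric}, so the task reduces to verifying that the word $Q = \mathbf{c}\cdot\mathbf{w}$, where $\mathbf{w}$ denotes the $\mathbf{c}$-sorting word for $w_0$, meets that theorem's three hypotheses with $w = w_0$: namely $\Dem(Q) = w_0$, the length bound $\ell(w_0) < |Q| \le \ell(w_0) + \dimn(T)$, and root independence of $Q$. Granting these, Theorem~\ref{toric} identifies $\fiber{w_0}$ with the toric variety of the brick polytope $B(Q,w_0)$, after which only the identification of $B(Q,w_0)$ with the associahedron remains.

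First I would settle the Demazure product and the length count, which are the two elementary verifications. Because $\mathbf{w}$ is by construction a reduced expression for $w_0$, we have $\Dem(\mathbf{w}) = w_0$; since the Demazure product of a concatenation is the Demazure (0-Hecke) product of the factors and $w_0$ is absorbing for this product, it follows that $\Dem(Q) = \Dem(\mathbf{c}) \cdot w_0 = w_0$. For the length count, a Coxeter word uses each of the $|S| = \dimn(T)$ simple reflections exactly once, so $|\mathbf{c}| = \dimn(T)$, while $|\mathbf{w}| = \ell(w_0)$; hence $|Q| = \ell(w_0) + \dimn(T)$, which satisfies the required bound with equality on the right, and the left-hand strict inequality holds because $S \neq \emptyset$. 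In particular the brick manifold has dimension $|Q| - \ell(w_0) = \dimn(T)$, as it must for a $T$-toric variety.

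The third hypothesis, root independence, together with the final identification, is where I would invoke the external combinatorics rather than reprove it. Pilaud and Stump show in \cite{PS} that the words $\mathbf{c}\cdot\mathbf{w}$ (their \emph{realizing} words, cf.\ Definition~\ref{rootind}) are root independent and that $\splex{Q}{w_0}$ is then dual to $B(Q,w_0)$; Ceballos, Labb\'e and Stump in \cite{CLS} identify this subword complex with the cluster complex, so that $B(Q,w_0)$ is the generalized associahedron, which in the type $A$ setting of this section is exactly the associahedron realized by Hohlweg and Lange \cite{MR2321739} and by Pilaud and Santos \cite{MR2864447}. Applying Theorem~\ref{toric} and using that the toric variety of a polytope depends only on its normal fan, we conclude that $\fiber{w_0}$ is the toric variety of the associahedron as realized in those references.

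The genuine mathematical content sits entirely in the cited results: the main point that could go wrong is a mismatch of conventions, so the step I would be most careful about is confirming that the brick polytope produced here is the \emph{same} realization (up to translation, hence with the same normal fan) as the specific associahedra of \cite{MR2321739} and \cite{MR2864447}, not merely a combinatorially isomorphic polytope. That equivalence is precisely what \cite{CLS} and \cite{MR2864447} establish, so no new argument is needed; the only original verifications are the elementary Demazure-product and dimension bookkeeping carried out above.
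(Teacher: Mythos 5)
Your proposal is correct and takes essentially the same route as the paper: the paper's justification is precisely to combine the preceding toric classification theorem with the cited results of Pilaud--Santos, Pilaud--Stump, and Ceballos--Labb\'e--Stump identifying the brick polytope $B(Q,w_0)$ with the Hohlweg--Lange associahedron. The only difference is that you make explicit the Demazure-product, length, and root-independence bookkeeping that the paper leaves implicit, which is a harmless refinement rather than a new argument.
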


\begin{example} The toric variety of the pentagon from example \ref{pent}, i.e. the associahedron corresponding to the Coxeter element $c=(s_1,s_2)$, is 
\begin{displaymath} \fiber{w}=\{(V_1,V_2,V_3) :\text{ the diagram below holds}\}\end{displaymath}
\begin{center}
  \begin{tikzpicture}
\node (top) at (0,0) {$\C^3$};
    \node [below of=top] (flag2)  {$\langle e_1,e_2\rangle$};
    \node [below of=flag2] (flag1) {$\langle e_1\rangle$};
    \node [right of=flag1] (L1) {$V_1$};
        \node [right of=flag2] (P1) {$V_2$};
        \node [right of=L1] (L2) {$V_3$};
\node [right of=P1] (P2) {$\langle e_2,e_3 \rangle$};
\node [right of=L2] (L3) {$\langle e_3 \rangle$};
\node [below of=flag1] (down) {0};
\draw [thick,shorten <=-2pt] (top) -- (flag2);
\draw [thick,shorten <=-2pt] (flag2) -- (flag1);
\draw [thick,shorten <=-2pt] (P1) -- (L1);
\draw [thick,shorten <=-2pt] (P1) -- (L2);
\draw [thick,shorten <=-2pt] (P2) -- (L2);
\draw [thick,shorten <=-2pt] (P2) -- (L3);
\draw [thick,shorten <=-2pt] (top) -- (P1);
\draw [thick,shorten <=-2pt] (top) -- (P2);
\draw [thick,shorten <=-2pt] (flag2) -- (L1);
\draw [thick,shorten <=-2pt] (flag1) -- (down);
\draw [thick,shorten <=-2pt] (L1) -- (down);
\draw [thick,shorten <=-2pt] (L2) -- (down);
\draw [thick,shorten <=-2pt] (L3) -- (down);
\end{tikzpicture}
\end{center}

\end{example}

\section{Brick manifolds in the general case}\label{sec:gbs}

Let $G$ be a complex semisimple Lie group, let $B$ be a Borel subgroup of $G$, i.e., a maximal solvable subgroup, and $T$ be the maximal torus contained in $B$. Let $W$ be the Weyl group of $G$ with generators $S=\{s_1,\ldots,s_n\}$, which correspond to the simple roots $\Delta(W)=\{\alpha_1,\ldots,\alpha_n\}$. Let $P$ be a parabolic subgroup of $G$, i.e., a subgroup of $G$ for which the quotient $B/P$ is a projective algebraic variety; this condition is equivalent to $P$ contains $B$. We denote by $P_i$ the minimal parabolic subgroup corresponding to $s_i$, we then have that $P_i/B\cong \C\Pe^1$. The torus $T$ acts on this quotient and this action has exactly two $T$-fixed points: one corresponding to the identity element and one corresponding to the generator $s_i$.

\begin{definition} Let $Q=(s_{i_1},\ldots,s_{i_m})$ be a word in the generators of $W$. Then the product $P_{i_1}\times \cdots\times P_{i_m}$ has an action of $B^m$ given by:
	\begin{displaymath}(b_1,\ldots,b_m)\cdot(p_1,\ldots,p_m)=(p_1b_1,b_1^{-1}p_2b_2,\ldots,b_{m-1}^{-1}p_mb_m)\end{displaymath}
 The \defn{Bott-Samelson variety} of $Q$ is the quotient of the product of the $P_i$'s by this action
\begin{displaymath}BS^Q:=(P_{i_1}\times\cdots\times P_{i_m})/B^m.\end{displaymath}
\end{definition}

Bott-Samelson varieties are smooth, irreducible and $|Q|$-dimensional algebraic varieties. They have a $B$ action given by
\begin{displaymath}b\cdot(p_1,p_2,\ldots,p_m)=(b\cdot p_1, p_2,\ldots, p_m).\end{displaymath} 
and they come equipped with a natural $B$-equivariant map 
\begin{align*} BS^Q &\lra{m_Q} G/B\\
(p_1,\ldots,p_m) &\lmp{}(p_1\cdots p_m)B/B. 
\end{align*}
The image of this map is the opposite Schubert variety $X^w:=\overline{BwB/B}$, where $w=\Dem(Q)$. In the case in which $Q$ is reduced, this map is a resolution of singularities for $X^w$, however in this paper we will study cases in which $Q$ is not reduced.

\begin{definition} Let $Q=(q_1,\ldots,q_\sizeQ)$ be a word in the generators of $W$ and $w=\Dem(Q)$, then the \defn{brick manifold} is the fiber $\fiber{w}$. 
\end{definition}

\begin{theorem} Brick manifolds are smooth, irreducible and $\dimn(\fiber{w})=|Q|-\ell(w)$.
\end{theorem}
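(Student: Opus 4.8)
The plan is to exploit the $B$-equivariance of $m_Q$ together with the fact that $wB/B$ lies in the open dense $B$-orbit of the image $X^w=\overline{BwB/B}$, where $w=\Dem(Q)$. First I would restrict attention to the preimage of this open orbit. Since $BwB/B$ is open and dense in $X^w=m_Q(BS^Q)$, its preimage $E:=m_Q^{-1}(BwB/B)$ is open and nonempty in $BS^Q$, hence open and dense; as $BS^Q$ is smooth, irreducible and $|Q|$-dimensional, so is $E$.

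Next I would identify $E$ as a homogeneous fiber bundle over the orbit. The orbit $BwB/B$ is the homogeneous space $B/P'$ with $P'=B\cap wBw^{-1}$ the stabilizer of $wB/B$, and $\dimn(B/P')=\ell(w)$. Because $m_Q$ is $B$-equivariant and $B$ acts transitively on $B/P'$, the restriction $E\to B/P'$ is the fiber bundle associated to the principal $P'$-bundle $B\to B/P'$ with fiber the brick manifold $Z:=\fiber{w}$; concretely $E\cong B\times^{P'}Z$ via $[b,z]\mapsto b\cdot z$. The subgroup $P'$ is a connected solvable group, hence special in the sense of Serre, so every $P'$-torsor is Zariski-locally trivial; in particular both $B\to B/P'$ and the associated bundle $E\to B/P'$ are Zariski-locally trivial with fiber $Z$.

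From local triviality the three assertions follow by transferring them from $E$ to $Z$. Choosing a trivializing open $U\subseteq B/P'$ gives an isomorphism $m_Q^{-1}(U)\cong U\times Z$ realizing $U\times Z$ as an open subvariety of $E$. Since $E$ is irreducible, this open set is irreducible, and (over an algebraically closed field) the product $U\times Z$ with $U$ irreducible is irreducible only when $Z$ is irreducible; likewise $E$ smooth forces $U\times Z$ smooth, hence $Z$ smooth. Finally $\dimn Z=\dimn E-\dimn(B/P')=|Q|-\ell(w)$.

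The hard part will be justifying rigorously the homogeneous fiber-bundle description $E\cong B\times^{P'}Z$ and its Zariski-local triviality --- that is, that a $B$-equivariant morphism onto a single orbit is the associated bundle of $B\to B/P'$ --- together with the identifications $\dimn(B/P')=\ell(w)$ and the connectedness of $P'=B\cap wBw^{-1}$ (which one checks is generated by $T$ and the positive root subgroups $U_\alpha$ with $w^{-1}\alpha>0$). Everything else is formal once local triviality is in hand.
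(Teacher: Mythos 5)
Your proposal is correct, and while its geometric core overlaps with the paper's proof, it organizes the work genuinely differently. The paper splits the three claims: smoothness and the dimension count come from Kleiman's transversality theorem applied to the fibered product $(wB/B)\times_{X^w}BS^Q$, while irreducibility comes from exactly the product decomposition you construct --- the paper observes that for $N_w:=N\cap wN_-w^{-1}$ the orbit $N_w\cdot wB/B$ is free and dense in $X^w$, and that $B$-equivariance gives $m_Q^{-1}(N_w\cdot wB/B)\cong m_Q^{-1}(wB/B)\times N_w$, a dense open subset of the irreducible $BS^Q$. You instead extract all three statements from the bundle structure of $E=m_Q^{-1}(BwB/B)$ over the open cell, which makes Kleiman unnecessary; that is a real gain in self-containedness, and your tangent-space transfer of smoothness and dimension through a product is sound. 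Conversely, the step you flag as ``the hard part'' --- Zariski-local triviality of $E\to B/P'$ with $P'=B\cap wBw^{-1}$ --- is easier than you suggest and needs no appeal to Serre's special groups: the Bruhat decomposition gives an isomorphism of varieties $N_w\times P'\to B$, $(n,p)\mapsto np$, so $B\to B/P'$ has a \emph{global} section over $B/P'\cong BwB/B\cong N_w\cong\A^{\ell(w)}$, the associated bundle is globally trivial, and one lands directly on $E\cong N_w\times m_Q^{-1}(wB/B)$, which is precisely the isomorphism in the paper. With that substitution, your argument subsumes the paper's irreducibility step and replaces its use of Kleiman by an elementary dimension and tangent-space count; the paper's route, in exchange, gets smoothness in one line at the cost of citing a bigger theorem.
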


\begin{proof} We can write the fiber as the fibered product $(wB/B)\times_{X^w}BS^Q$, so by Kleiman's transversality theorem, see \cite{MR0360616}, we have that this fiber is a smooth variety of the desired dimension. Let $N$ be the unipotent subgroup corresponding to $B$ and $N_-$ the opposite unipotent subgroup. A consequence of the Bruhat decomposition of $G/B$ is that if $N_w:=N\cap wN_-w^{-1}$, then $N_w\cdot wB/B$ is a free dense orbit in $X^w$. Since $BS^Q$ maps $B$-equivariantly to $X^w$, the preimage of $N_w\cdot wB/B$ is isomorphic to $\fiber{w}\times N_w$. Since $BS^Q$ is irreducible, it follows that the brick manifold is irreducible.
\end{proof}

\subsection{Symplectic structure on Bott-Samelson varieties and brick manifolds}\label{symp}
A reference for toric moment maps of coadjoint orbits is Chapter 5 of \cite{MR1414677}. Let $P_{\hat{i}}$ be the maximal parabolic subgroup of $G$ corresponding to the generators $S_{\hat{i}}:=\{s_1,\ldots,\hat{s_i},\ldots,s_n\}$. Note that for $G=SL_n(\C)$ each quotient $G/P_{\hat{i}}$ is a Grassmannian. Let $K$ be the maximal compact subgroup of $G$. Then we can view $G/P_{\hat{i}}$ as a coadjoint orbit, i.e., a $K$-orbit through the fundamental weight $\omega_i\in\mathfrak{k}^*$, where $\mathfrak{k}$ is the Lie algebra of $K$. This interpretation gives us a symplectic structure on $G/P_{\hat{i}}$ with respect to the action of $K$ such that the inclusion 
\begin{displaymath}G/P_{\hat{i}}\hlra{}\mathfrak{k}^*\end{displaymath}
is a moment map for the $K$-action. Then the composition 
\begin{displaymath}G/P_{\hat{i}}\hlra{}\mathfrak{k}^*\lra{}\mathfrak{t}^*\end{displaymath}
is the moment map of $G/P_{\hat{i}}$ with respect to the torus action, where $\mathfrak{t}$ is the Lie algebra of the torus. Moreover, the moment map for the diagonal $T$-action on a product $\prod G/P_{\hat{i}}$ is the sum of the moment maps $G/P_{\hat{i}}\lra{}\mathfrak{t}^*$.

Let $T$ act on $BS^Q$ by 
\begin{displaymath}t\cdot(p_1,p_2,\ldots,p_m)=(t\cdot p_1, p_2,\ldots, p_m).\end{displaymath} 
Given $Q=(q_1,\ldots,q_m)$ we have a $T$-equivariant inclusion
\begin{displaymath} BS^Q\hlra{\varphi} \prod_{i : s_i\in Q} G/P_{\hat{i}}\end{displaymath}
where $\varphi=(\varphi_1,\ldots,\varphi_m)$ and the $k$-th component is
	\begin{align*} BS^Q & \lra{\varphi_k} G/P_{\hat{k}} \\
	(p_1,\ldots,p_m) & \lmp{} (\prod_{i<j} p_i) P_{\hat{k}}.
	\end{align*} 
This map makes $BS^Q$ a symplectic submanifold. The composition 
	\begin{displaymath} BS^Q\hlra{\varphi} \prod_{i : s_i\in Q} G/P_{\hat{i}}\lra{} \mathfrak{t}^*\end{displaymath}
gives us a moment map for this Bott-Samelson variety with respect to the $T$-action. Thus Bott-Samelson varieties are Hamiltonian symplectic manifolds with respect to this torus action. The image of this map is the \defn{moment polytope} and by Atiyah \cite{MR642416}, Guillemin-Sternberg \cite{MR664117}, it equals the convex hull of the images of the $T$-fixed points. Recall the correspondence between $T$-fixed points on $BS^Q$ and subwords $J$ of $Q$: if $p(J)$ is the $T$-fixed point corresponding to $J$ then
\begin{displaymath}m_Q(p(J))=\left( Q \setminus J \right)_{(\sizeQ)}B/B\in G/B.\end{displaymath}
 This correspondence motivates the relation between fibers of the map $m_Q:BS^Q\longrightarrow G/B$ and subword complexes.

We now describe the image of the $T$-fixed points under the moment map. For each $k$ we have the moment map 
	\begin{displaymath}\mu_k:G/P_{\hat{k}}\lra{}\mathfrak{t}^*,\end{displaymath}
 where $\mu_k(P_{\hat{k}})=\omega_k$, the fundamental weight corresponding to $s_k$, and it maps a general element to a Weyl conjugate of this fundamental weight. Before we finish describing the maps $\mu_k$, we note that the moment map of $BS^Q$ is then 
 	\begin{displaymath}\sum_{k=1}^m\varphi_k\circ\mu_k.\end{displaymath}
 Consider the fixed point $(p_1,\ldots,p_m)$ in $BS^Q$ corresponding to the subword $J$ of $Q$ then under the moment map $\mu_k$ each $p_j$ corresponds to either the reflection $s_{i_j}$ if $q_j\in J$ or to the identity in $W$. In other words, $p_j$ corresponds to $s_{i_j}$ if $p_j\notin B$ and to the identity in $W$ otherwise. In conclusion we have that for $J$ subword of $ Q$ and 
 	\begin{displaymath}p_J= \text{ the fixed point corresponding to } J\end{displaymath}
\begin{align*} BS^Q &\lra{\varphi_k\circ\mu_k} \mathfrak{t}^*\\
p_J &\lmp{} (J )_{(k-1)}(\omega_k).
\end{align*}
It then follows that
\begin{align} BS^Q &\lra{\mu} \mathfrak{t}^*\\
\label{eq} p_J &\lmp{} \sum_{k=1}^m (J )_{(k-1)}(\omega_k)
\end{align} 

\subsection{Moment polytopes of brick manifolds}

We now state and prove the main results of the paper.

\begin{theorem}\label{moment} Let $w=\Dem(Q)$.
 The image of $\fiber{w}$ under the moment map is the brick polytope $B(Q,w)$.
\end{theorem}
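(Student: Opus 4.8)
The plan is to obtain the theorem as a direct application of the Atiyah--Guillemin--Sternberg convexity theorem to the brick manifold itself. First I would record that $\fiber{w}$ is a compact, connected, $T$-invariant symplectic submanifold of $BS^Q$: it is closed in the projective variety $BS^Q$ and is smooth and irreducible by the theorem above, hence a compact connected complex submanifold of the Kähler manifold $BS^Q$, on which the restricted Kähler form is still non-degenerate; and it is $T$-stable because $wB/B$ is a $T$-fixed point of $G/B$ and $m_Q$ is $T$-equivariant. The restriction $\mu|_{\fiber{w}}$ is then a moment map for the $T$-action on $\fiber{w}$, since the defining identity of a moment map is local and each fundamental vector field of the ambient action is tangent to the invariant submanifold. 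The convexity theorem of Atiyah and Guillemin--Sternberg therefore gives
\[ \mu(\fiber{w}) = \text{conv}\{\mu(p) : p\in(\fiber{w})^T\}. \]

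Next I would identify the fixed locus. By the correspondence recalled in this section, the $T$-fixed points of $BS^Q$ are the points $p(J)$ indexed by subwords $J$ of $Q$, and $m_Q(p(J)) = (Q\setminus J)_{(\sizeQ)}B/B$; hence $p(J)$ lies in $\fiber{w}$ exactly when $(Q\setminus J)_{(\sizeQ)} = w$ in $W$. I claim this set of subwords equals the index set $\{J\in\splex{Q}{w} : (Q\setminus J)_{(\sizeQ)} = w\}$ appearing in the definition of $B(Q,w)$. One inclusion is immediate; for the converse, if $(Q\setminus J)_{(\sizeQ)} = w$ then the deletion property of Coxeter groups yields a reduced subword $R$ of $Q\setminus J$ representing $w$, and $F := Q\setminus R$ is then a facet of $\splex{Q}{w}$ containing $J$, so $J$ is indeed a face.

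It remains to match moment values with brick vectors. Formula \eqref{eq}, read in the convention in which $J$ records the contacts (so that the nondeleted letters of $Q\setminus J$ are the crossings whose product is $w$), computes
\[ \mu(p(J)) = \sum_{k=1}^{\sizeQ}(Q\setminus J)_{(k-1)}(\omega_{q_k}) = \sum_{k=1}^{\sizeQ}\Weight{J}{k} = B(J). \]
Combining the three steps,
\[ \mu(\fiber{w}) = \text{conv}\{B(J) : J\in\splex{Q}{w},\ (Q\setminus J)_{(\sizeQ)} = w\} = B(Q,w). \]
As a byproduct, since the left-hand side is a convex polytope whose vertices form a subset of the fixed-point images, one reads off that the extreme points are the brick vectors of the facets, which is the promised equivalence of the present definition of $B(Q,w)$ with the one of Pilaud and Stump.

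The step I expect to be most delicate is the bookkeeping in the previous paragraph: the two parts of this section use ``$J$'' for opposite roles --- the contacts in the subword-complex correspondence and the crossings in the derivation of \eqref{eq} --- so I would need to align these carefully, verifying coordinate by coordinate that the products $(Q\setminus J)_{(k-1)}$ and the weights $\omega_{q_k}$ reproduce the brick vector $B(J)$ itself rather than some reflected or complementary vector. The symplectic input (that $\fiber{w}$ is a genuine Hamiltonian $T$-submanifold, so that convexity applies) and the Coxeter-theoretic input (that every subword with product $w$ is automatically a face) are each clean once stated precisely.
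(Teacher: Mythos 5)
Your proposal is correct, and its overall architecture is the same as the paper's: Atiyah--Guillemin--Sternberg convexity applied to the fiber, the identification of $T$-fixed points of $\fiber{w}$ with subwords $J$ satisfying $(Q\setminus J)_{(\sizeQ)}=w$, and the computation via Equation \eqref{eq} that $\mu(p(J))=B(J)$. The one substantive difference is in the middle step. The paper handles the faces-versus-facets issue by arguing that a non-facet face gives a non-reduced product, hence a root configuration of smaller dimension, hence a brick vector that cannot be a vertex; this is what lets the paper match the moment polytope with Pilaud--Stump's facet-only definition. You instead invoke the deletion property of Coxeter groups to show that \emph{every} subword whose complement has product $w$ is automatically a face of $\splex{Q}{w}$, so the fixed-point set indexes exactly the set appearing in the paper's own definition of $B(Q,w)$ --- which is all the theorem as stated requires. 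You are also more careful than the paper on two points it leaves implicit: that convexity genuinely applies to the fiber (compact, connected by irreducibility, $T$-stable, and symplectic as a smooth subvariety of a K\"ahler manifold), and that the two conflicting conventions for $J$ in Section \ref{symp} (crossings in the derivation of \eqref{eq}, contacts in the subword-complex correspondence) must be reconciled before \eqref{eq} can be read as producing $\Weight{J}{k}$.

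One caveat: your closing ``byproduct'' --- that the extreme points of $\mu(\fiber{w})$ are exactly the brick vectors of facets, yielding the equivalence with Pilaud--Stump's definition --- does not follow from what you wrote. Knowing that the vertices lie among the fixed-point images does not rule out a non-facet face contributing a vertex; excluding that possibility is precisely what the paper's root-configuration argument supplies (and it leans on results of Pilaud--Stump). Since that claim is extraneous to the statement being proved, this is not a gap in your proof of the theorem, but you should either drop the remark or import the paper's argument for it.
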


\begin{proof}
 $T$-fixed points of $BS^Q$ are in 1-1 correspondence with subwords $J$ of $ Q$. This induces a 1-1 correspondence between $T$-fixed points of $\fiber{w}$ and the subwords $J$ of $Q$ with $(Q\setminus J)_{(\sizeQ)}=w$, where $w=\Dem(Q)$. If the subword $J$ is not a facet of the subword complex $\Delta(Q,w)$ then it gives a non reduced product $( Q\setminus J)_{(\sizeQ)}$. This implies that the root configuration $r(J)=\{\{r(J,i): i\in F\}\}$ has a smaller dimension than the root configuration of a facet and thus it cannot be a vertex. Therefore, the moment polytope is the convex hull of the points corresponding to facets of $\Delta(Q,w)$ and by Equation \ref{eq} the image of each fixed point is precisely the one defined in Equation \ref{other} in Section \ref{brick} by Pilaud and Stump.
\end{proof}

Note that this theorem does not assume that the fiber is a toric variety so the relation between brick polytopes and brick manifolds is quite strong. The following theorem classifies toric brick manifolds.

\begin{theorem}\label{toric} Let $w=\Dem(Q)$. The fiber $\fiber{w}$ is a toric variety with respect to the torus $T$ if and only if $Q$ is root independent and $\ell(w)<|Q|\leq\ell(w))+\dimn(T)$. Moreover, $\fiber{w}$ is the toric variety associated to the polytope $B(Q,w)$.
\end{theorem}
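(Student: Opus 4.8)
The plan is to pass to the symplectic category and invoke Delzant's classification, using as the crucial link the fact that the $T$-isotropy weights on the tangent space of $\fiber{w}$ at a torus-fixed point are exactly the root configuration. From the dimension formula proved above, $\fiber{w}$ is a smooth irreducible projective variety of complex dimension $|Q|-\ell(w)$; as a closed complex submanifold of the K\"ahler manifold $BS^Q$ it inherits a $T$-invariant symplectic form, and by Theorem~\ref{moment} the associated moment map $\mu$ sends $\fiber{w}$ onto $B(Q,w)$. The organizing principle is the standard dichotomy for a compact connected Hamiltonian $T$-manifold $M$: the moment image has dimension equal to that of a principal $T$-orbit, this dimension is at most $\dimn_{\mathbb C}M$, and $M$ is a symplectic toric manifold exactly when the equality $\dimn\mu(M)=\dimn_{\mathbb C}M$ holds, in which case Delzant's theorem identifies $M$ with the toric variety of $\mu(M)$.

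For the ``only if'' direction I would argue as follows. If $\fiber{w}$ is toric for $T$ it has a dense orbit, so $\dimn B(Q,w)=\dimn\mu(\fiber{w})=\dimn_{\mathbb C}\fiber{w}=|Q|-\ell(w)$. Since $B(Q,w)$ lies in $\mathfrak t^*$ this gives $|Q|-\ell(w)\le\dimn T$, hence the upper bound $|Q|\le\ell(w)+\dimn T$, while $\ell(w)<|Q|$ is the standing hypothesis that $Q$ is non-reduced. Root independence then comes from smoothness: at the fixed point $p(F)$ attached to a facet $F$ of $\splex{Q}{w}$ the tangent weights of $\fiber{w}$ are the multiset $\{\{\Root{F}{i}:i\in F\}\}$, and for a smooth toric variety these $|Q|-\ell(w)$ weights must form part of a $\mathbb Z$-basis of the character lattice, in particular be linearly independent --- which is precisely the statement that $r(F)$ is linearly independent, i.e. that $Q$ is root independent.

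For the ``if'' direction I would run this in reverse. Assuming $Q$ root independent, Pilaud and Stump \cite{PS} show that $B(Q,w)$ is the polar dual of the subword complex $\splex{Q}{w}$; since the latter is a simplicial sphere of dimension $|Q|-\ell(w)-1$, the brick polytope is a simple polytope of dimension $|Q|-\ell(w)$, and its $|Q|-\ell(w)$ linearly independent root directions at a vertex re-derive $|Q|\le\ell(w)+\dimn T$. Thus $\dimn\mu(\fiber{w})=\dimn B(Q,w)=\dimn_{\mathbb C}\fiber{w}$, so by the dichotomy above $\fiber{w}$ is a symplectic toric manifold; being smooth, Delzant's theorem shows $B(Q,w)$ is Delzant and recovers $\fiber{w}$ as its associated toric variety.

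The step I expect to be the main obstacle is the identification of the tangent weights at $p(F)$ with the roots $\Root{F}{i}$ for $i\in F$. One must decompose $T_{p(F)}BS^Q$ into its $m$ weight lines, compute $dm_Q$, and check that the kernel --- the tangent space of the fiber --- is spanned exactly by the lines indexed by the contacts $i\in F$, with weights given by the root function, while the base directions account for the remaining $\ell(w)$ positions. This computation is what pins the combinatorial root configuration to the geometry, and it simultaneously supplies both the root-independence equivalence and the Delzant lattice condition needed for Delzant's theorem.
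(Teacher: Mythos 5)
Your overall strategy is the same as the paper's: both arguments work in the symplectic category, take Theorem \ref{moment} (the moment image of $\fiber{w}$ is $B(Q,w)$) as the key input, and characterize toricness by the condition that the moment image have dimension equal to $\dimn_{\C}\fiber{w}=|Q|-\ell(w)$ --- the paper phrases this as the generic $T$-stabilizer being finite, after shrinking $T$ so that its dimension equals that of the fiber, which is the same dichotomy you invoke. Where you genuinely diverge is the ``only if'' direction: you extract root independence from the geometry, via the claim that the $T$-isotropy weights on the tangent space of $\fiber{w}$ at a fixed point $p(F)$ are exactly the root configuration $r(F)$, followed by the fact that tangent weights at a fixed point of a smooth toric variety are linearly independent. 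The paper never makes (or needs) this tangent-weight computation: it obtains the equivalence ``moment image full-dimensional $\Longleftrightarrow$ $Q$ root independent'' purely combinatorially, because by Pilaud--Stump \cite{PS} the cone of the brick polytope at a vertex $B(F)$ is generated (up to sign) by the root configuration $r(F)$, so the polytope has dimension $|Q|-\ell(w)$ precisely when those $|Q|-\ell(w)$ roots are linearly independent. Since you yourself flag the tangent-weight identification as ``the main obstacle'' and only sketch how it would be verified, this is the one real soft spot in your write-up; it is, however, avoidable --- run your ``if'' argument backwards through the same Pilaud--Stump fact and both directions close with no new geometric input. What your route would buy, once the lemma is actually proved, is a stronger statement: an explicit identification of the isotropy weights with the root function, which simultaneously certifies the Delzant condition on $B(Q,w)$ rather than deducing it after the fact from Delzant's theorem. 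What the paper's route buys is brevity, since everything is delegated to Theorem \ref{moment} and the combinatorics of \cite{PS}.
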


\begin{proof} Note that $\dimn(\fiber{w}) \leq \dimn(T)$. However, if we have $<$ then we can make the torus smaller and so without loss of generality we can assume the dimensions are equal. It suffices to show that $T$ doesn't have generic stabilizer of positive dimension. This is true if and only if $\mu(\fiber{w})$ spans $\R^n$ and this happens precisely when $Q$ is root independent.
\end{proof}

\subsection{Stratification of the brick manifold}

We give a stratification whose dual, in some sense, is the subword complex. We now introduce and recall some notation. Consider a complex semisimple Lie group $G$ with upper and lower Borel subgroups $B=B^+$ and $B^-$, and Weyl group $W$. For $u\in W$ we have the \defn{Schubert cell} $\mathring{X}_v:=B^-uB/B$ and the \defn{opposite Schubert cell} $\mathring{X}^v:=B^+uB/B$. The Schubert variety $X^v$ and opposite Schubert variety $X_u$ are the closure of $\mathring{X}_u$ and $\mathring{X}^u$, respectively. Given $u,v\in W$, the \defn{open Richardson variety} is $\mathring{X}^v_u:=\mathring{X}^v\cap \mathring{X}_u$. The \defn{Richardon variety} $X^v_u$ is the closure of $\mathring{X}^v_u$ This variety is nonempty if and only if $u\leq v$ in the Bruhat order, and its dimension is $\ell(v)-\ell(u)$. Then $\displaystyle{X^v_u=\coprod_{u\leq x< y\leq v}\mathring{X}^y_x}$ is a stratification.

Given a Bott-Samelson variety $BS^Q:=(P_{i_1}\times\cdots\times P_{i_m})/B^m$ and a subword $R$ of $Q$, we can realize $BS^R$ inside $BS^Q$ by 
\begin{displaymath}BS^R=\{(p_1,\ldots,p_m) : p_{i_j}=id \text{ if } s_{i_j}\notin R \};\end{displaymath}
note that $BS^R\cap BS^S=BS^{R\cap S}$. 
Let $\mathring{BS^R_u}:=BS^R\cap m_Q^{-1}(\mathring{X}_u)$ then these subvarieties yield a stratification of $BS^Q$, where $R$ ranges over all subwords of $Q$ and $u\in W$. 
We have that $BS^R_u\neq \emptyset$ if and only if  $\Dem(R)\geq u$. Moreover, $BS^R_u\subseteq BS^S_v$ if and only if $R$ is a subword of $S$ and $u\geq v$ in Bruhat order.
This induces a stratification of $\fiber{w}$, described in the following theorem, that is dual to the subword complex $\splex{Q}{\Dem(Q)}$.

\begin{theorem}\label{strati} Let $w=\Dem(Q)$. Brick manifolds have the stratification 
\begin{displaymath}\fiber{w}=\coprod_R \mathring{BS^R_w},\end{displaymath}
 where $R$ ranges over all subwords of $Q$ with $\Dem(R)=w$. This stratifications satisfies the nice property that the intersection of any two strata, when nonempty, is again a stratum (instead of a union of strata).
\end{theorem}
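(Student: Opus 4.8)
The plan is to restrict the ambient stratification $BS^Q=\coprod_{R,u}\mathring{BS^R_u}$ to the fiber and read off which pieces survive, and then to reduce the ``nice property'' to a monotonicity statement for the Demazure product. First I would observe that $\fiber{w}$ is contained in $m_Q^{-1}(\mathring{X}_w)$: the $T$-fixed point $wB/B$ lies in the single Schubert cell $\mathring{X}_w=B^-wB/B$ and in no other $\mathring{X}_u$, since each Bruhat cell contains exactly one $T$-fixed point. Consequently $\fiber{w}\cap\mathring{BS^R_u}=\emptyset$ unless $u=w$, so only the pieces with $u=w$ contribute. Among these, $\mathring{BS^R_w}$ is nonempty exactly when $\Dem(R)\ge w$; but $R$ is a subword of $Q$, so we always have $\Dem(R)\le\Dem(Q)=w$, and the surviving strata are therefore precisely those indexed by subwords $R$ with $\Dem(R)=w$. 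This yields the asserted decomposition $\fiber{w}=\coprod_R\mathring{BS^R_w}$.

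For the ``nice property'' I would work with the closed strata $\tau_R:=\fiber{w}\cap BS^R_w$, the traces on the fiber of the ambient closures $BS^R_w=\overline{\mathring{BS^R_w}}$. Using the given closure relation, $BS^R_w=\coprod\{\mathring{BS^{R'}_v}:R'\subseteq R,\ v\ge w\}$; intersecting with $\fiber{w}$ kills every term with $v>w$ by the first paragraph, so
\[
\tau_R\cap\tau_S=\coprod\{\mathring{BS^{R'}_w}\cap\fiber{w}:R'\subseteq R\cap S,\ \Dem(R')=w\},
\]
where I have used that distinct strata are disjoint. It remains to see that this union is again a single closed stratum. Suppose it is nonempty, so there is a subword $R'\subseteq R\cap S$ with $\Dem(R')=w$. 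Because the Demazure product is monotone for the subword order, $\Dem(R')\le\Dem(R\cap S)\le\Dem(Q)=w$, which forces $\Dem(R\cap S)=w$. Hence $R\cap S$ itself indexes a stratum and is the unique maximal element of $\{R'\subseteq R\cap S:\Dem(R')=w\}$, giving $\tau_R\cap\tau_S=\tau_{R\cap S}$; if no such $R'$ exists, the intersection is empty. Thus the intersection of two closed strata is empty or a single closed stratum, as claimed.

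Combinatorially this is the shadow of the trivial fact that the intersection of two faces of the simplicial complex $\splex{Q}{w}$ is again a face: the monotonicity of $\Dem$ together with the maximality of $w=\Dem(Q)$ among the subwords of $Q$ is exactly what encodes that $\{R:\Dem(R)=w\}$ is closed under the meet $R\cap S$ whenever that meet stays inside the index set.

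The step I expect to be the main obstacle is justifying that the $\tau_R$ really are the closures in $\fiber{w}$ of the open strata $\mathring{BS^R_w}\cap\fiber{w}$, i.e.\ that taking closures commutes with restriction from $BS^Q$ to the fiber. The inclusion $\overline{\mathring{BS^R_w}\cap\fiber{w}}\subseteq\tau_R$ is automatic, but I must rule out that the closure shrinks upon restriction. Here I would use the product description from the dimension theorem: over the dense cell $\mathring{X}^w=N_w\cdot wB/B$ of $X^w$ the Bott--Samelson variety splits as $m_Q^{-1}(\mathring{X}^w)\cong\fiber{w}\times N_w$, and the ambient strata are compatible with this product, so that ambient closures restrict to closures in the fiber. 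Granting this, the displayed decomposition of $\tau_R$ is genuinely the stratum closure, and the intersection computation above completes the proof.
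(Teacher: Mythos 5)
Your argument is correct and follows essentially the same route as the paper: restrict the ambient stratification to the fiber, observe that only strata with $u=w$ can meet $\fiber{w}$, use monotonicity of the Demazure product over subwords to force $\Dem(R)=w$, and reduce the intersection property to the fact that $BS^R\cap BS^S=BS^{R\cap S}$ (your closure-relation computation is an expanded version of exactly this step in the paper). The only remark: the obstacle you flag in your last paragraph is moot, because each open stratum $\mathring{BS^R_w}$ with $\Dem(R)=w$ is already contained in the closed subvariety $\fiber{w}$, so its ambient closure automatically stays inside the fiber and closures commute with restriction for free; this lets you drop the product-structure argument, which is just as well since its claim that the ambient strata are compatible with the splitting $m_Q^{-1}(\mathring{X}^w)\cong\fiber{w}\times N_w$ is dubious (the cells $\mathring{X}_u$ defining the strata are $B^-$-invariant, not $N_w$-invariant).
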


\begin{proof}  If $p\in  \mathring{BS^Q_w}$, then $m_Q(p)\in X_w^w=\{wB/B\}$ and so $\fiber{w}$ is a stratum of $BS^Q$. Moreover, if $ \mathring{BS^R_w}\subset \fiber{w}$ is nonempty then $R$ is a subword and $\Dem(R)\geq u\geq w$ but then $\Dem(R)=u=w$. Therefore, the stratification of the Bott-Samelson variety restricts to a stratification of the brick manifold and $\mathring{BS^R_w}\cap \mathring{BS^S_w}=\mathring{BS^{R\cap S}_w}$.
\end{proof}

\subsection{Brick manifolds and Richardson varieties}

A subfamily of brick varieties were used before by Brion in \cite{MR2143072} in the proof of Theorem 4.2.1 as a resolution of singularities for Richardson varieties. Given a word $Q=(q_1,\ldots,q_\sizeQ)$, the \defn{opposite Bott-Samelson variety} $BS_Q$ is defined analogously to $BS^Q$. More precisely, 
\begin{displaymath}BS_Q:=(P^-_{i_1}\times\cdots\times P^-_{i_\sizeQ})/(B^-)^m,\end{displaymath}
where $B^-$ is the opposite Borel and the $P^-_i$ are the \defn{opposite minimal parabolics}. The natural map to the flag variety is 
\begin{align*} BS_Q &\lra{m_Q} G/B\\
(p_1,\ldots,p_m) &\lmp{}(p_1\cdots p_mw_0)B/B. 
\end{align*}
Given an element $u\in W$ the opposite Bott-Samelson variety $BS_Q$ is a resolution of the Schubert variety $X_u$, where $Q$ is a reduced word for $uw_0$. Given $u,v\in W$, let $R$ be a reduced word for $v$ and $T$ be a reduced word word for $uw_0$, then the fibered product $BS^R\times_{G/B}BS_T$ with the map induced by $m_R$ is Brion's resolution of the Richardson variety $X^v_u$. We will prove that this fibered product is a brick manifold.

Given $u,v\in W$, let $R$ be a reduced word for $v$ and $S$ be a reduced word for $u^{-1}w_0$, where $w_0$ is the longest word in $W$. Now, if $Q=R+S$, i.e. $Q$ is the concatenation of $R$ and $S$, and $u\leq v$ then $\Dem(Q)=w_0$. Moreover, the brick manifold $\fiber{w_0}$ together with the map to the flag in the middle gives a resolution of the Richardson variety $X^v_u$.

\begin{example} Let $R=(s_1,s_2,s_3,s_1,s_2)$ and $S=(s_3,s_1,s_2,s_1)$. Then $\fiber{w_0}$ together with the map given by the red flag is a resolution of singularities for $X^v_u$ with $v=s_1s_2s_3s_1s_2$ and $u=s_1s_2$.

\begin{center}
  \begin{tikzpicture}
    {\color{red}\node (top) at (0,0) {$\C^4$};}
     \node [below of=top] (flag3)  {$\langle e_1,e_2,e_3\rangle$};
    \node [below of=flag3] (flag2)  {$\langle e_1,e_2\rangle$};
    \node [below of=flag2] (flag1) {$\langle e_1\rangle$};
    \node [right of=flag1] (L1) {$V_1$};
        \node [right of=flag2] (P1) {$V_2$};
               {\color{red} \node [right of=flag3] (S1) {$V_3$};}
           \node [right of=S1] (S2) {$\langle e_2,e_3,e_4\rangle$};     
        {\color{red} \node [right of=L1] (L2) {$V_4$};}
	\node [right of=L2] (L3) {$V_6$};
        {\color{red} \node [right of=P1] (P2) {$V_5$};}
        \node[right of=L3] (L4) {$\langle e_4\rangle$};
         \node [right of=P2] (P3) {$\langle e_3,e_4\rangle$};
        {\color{red}\node [below of=flag1] (down) {0};}
\draw [thick,shorten <=-2pt] (top) -- (flag3);
\draw [thick,shorten <=-2pt] (flag2) -- (flag1);
\draw [thick,shorten <=-2pt] (flag2) -- (flag3);
\draw [thick,shorten <=-2pt] (P1) -- (L1);
\draw [thick,shorten <=-2pt] (P1) -- (L2);
{\color{red}\draw [thick,shorten <=-2pt] (P2) -- (L2);}
\draw [thick,shorten <=-2pt] (S1) -- (P1);
{\color{red}\draw [thick,shorten <=-2pt] (top) -- (S1);}
\draw [thick,shorten <=-2pt] (top) -- (S2);
\draw [thick,shorten <=-2pt] (S2) -- (P2);
\draw [thick,shorten <=-2pt] (flag2) -- (L1);
\draw [thick,shorten <=-2pt] (flag1) -- (down);
\draw [thick,shorten <=-2pt] (L1) -- (down);
\draw [thick,shorten <=-2pt] (L3) -- (down);
{\color{red}\draw [thick,shorten <=-2pt] (L2) -- (down);}
{\color{red}\draw [thick,shorten <=-2pt] (S1) -- (P2);}
\draw [thick,shorten <=-2pt] (P3) -- (L3);
\draw [thick,shorten <=-2pt] (P1) -- (flag3);
\draw [thick,shorten <=-2pt] (L3) -- (P2);
\draw [thick,shorten <=-2pt] (L4) -- (P3);
\draw [thick,shorten <=-2pt] (L4) -- (down);
\draw [thick,shorten <=-2pt] (P3) -- (S2);
\end{tikzpicture}  
\end{center}
\end{example}

\begin{theorem}\label{richi} Let $u\leq v$ and $Q=R+S$, where $R$ is a reduced word for $v$ and $S$ is a reduced word for $u^{-1}w_0$. The brick manifold $\fiber{w_0}$ together with the map $m_R:BS^R_w\rightarrow G/B$ is a resolution of the singularities of the Richardson variety  $X^v_u$.
\end{theorem}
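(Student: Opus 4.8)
The plan is to identify the brick manifold $\fiber{w_0}$ with Brion's fibered product $BS^R\times_{G/B}BS_T$, where $T$ is a reduced word for $uw_0$, in a way that intertwines $m_R$ with the map to the flag variety of \cite{MR2143072}; the resolution statement then follows immediately from Brion's Theorem 4.2.1. First I would use the flag-gallery model of $BS^Q$: a point of $BS^{R+S}$ is a sequence $(F_0,F_1,\ldots,F_\sizeQ)$ of flags with $F_0=eB/B$ and $F_{k-1},F_k$ in relative position at most $s_{q_k}$, so that $m_R$ records the intermediate flag $F_{|R|}$ and $m_Q$ records $F_\sizeQ$. In these terms $\fiber{w_0}$ is cut out by the single condition $F_\sizeQ=w_0B/B$, and forgetting the last $|S|$ flags exhibits $\fiber{w_0}$ as glued from two pieces meeting along $F_{|R|}$: the initial gallery $(F_0,\ldots,F_{|R|})$, which is exactly a point of $BS^R$ with $m_R=F_{|R|}$, and the terminal gallery $(F_{|R|},\ldots,F_\sizeQ=w_0B/B)$ with its endpoint pinned.

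The crux is to recognize the terminal gallery as an opposite Bott-Samelson variety. Here I would reverse the order of the terminal gallery and left-translate the whole picture by $w_0^{-1}$, setting $G_i:=w_0^{-1}F_{\sizeQ-i}$. Then $G_0=eB/B$, and because left translation by $w_0^{-1}$ is the diagonal $G$-action on pairs of flags it preserves relative positions, so $(G_0,\ldots,G_{|S|})$ is a point of the ordinary Bott-Samelson variety of the reversed word $S^{\mathrm{rev}}$, with endpoint $G_{|S|}=w_0^{-1}F_{|R|}$. Since $S$ is reduced for $u^{-1}w_0$, the word $S^{\mathrm{rev}}$ is reduced for $(u^{-1}w_0)^{-1}=w_0u$, and translating back by $w_0$ turns $eB/B$ into the opposite base flag $w_0B/B$ and $B$ into $B^-=w_0Bw_0$. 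Computing the image as $w_0\cdot\overline{B(w_0u)B/B}=\overline{B^-uB/B}=X_u$, this is precisely an opposite Bott-Samelson resolution of $X_u$, agreeing with the variety $BS_T$ of \cite{MR2143072} up to the standard symmetry relating reduced words for $w_0u$ and $uw_0$. Under this identification the pinned endpoint and the meeting flag $F_{|R|}$ become the fibered-product gluing, yielding an isomorphism $\fiber{w_0}\cong BS^R\times_{G/B}BS_T$ carrying $m_R$ to the map to $G/B$ of \cite{MR2143072}.

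With the identification in hand, Brion's Theorem 4.2.1 says exactly that $BS^R\times_{G/B}BS_T$ with this map is a resolution of singularities of the Richardson variety $X^v_u=X^v\cap X_u$. Indeed, the image of $m_R$ lies in $X^v=\overline{BvB/B}$ because the initial gallery reads the reduced word $R$ for $v$, and lies in $X_u$ by the terminal-gallery computation above, so the image is $X^v\cap X_u$. As a consistency check, the earlier theorem on brick manifolds gives $\dimn\fiber{w_0}=|Q|-\ell(w_0)=\bigl(\ell(v)+\ell(u^{-1}w_0)\bigr)-\ell(w_0)=\ell(v)-\ell(u)=\dimn X^v_u$, matching the dimension of the target and confirming that birationality is possible.

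I expect the main obstacle to be the bookkeeping in the second paragraph: making the reversal, the $w_0$-translation, and the passage $B\leftrightarrow B^-$ precise enough that the opposite Bott-Samelson variety appears with exactly Brion's conventions (opposite parabolics, base flag $w_0B/B$, and the map $(p_1\cdots p_k)w_0B/B$), and verifying that the resulting isomorphism is algebraic and $T$-equivariant rather than merely a bijection on points. Everything else, including properness of $m_R$ (inherited from projectivity of $BS^Q$ over a point) together with smoothness and irreducibility of $\fiber{w_0}$ (already established for brick manifolds), is routine once this identification is in place.
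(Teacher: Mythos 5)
Your proposal is correct and takes essentially the same route as the paper: both identify $\fiber{w_0}$ with Brion's fibered product $BS^R\times_{G/B}BS_T$ via the flag-gallery description, splitting the gallery at the middle flag $F_{|R|}$ and reading the terminal part backwards, and then quote Brion's Theorem 4.2.1 for the resolution statement. The ``standard symmetry'' you leave implicit is exactly the paper's explicit definition of $T$, namely the reversal of $S$ with each letter conjugated by $w_0$, which makes the reversed terminal gallery a point of $BS_T$ on the nose.
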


\begin{proof} Let $T$ be the reduced word for $uw_0$ obtained by taking $S^{-1}$ and conjugating each letter by $w_0$. The result follows from identifying the fibered product $BS^R\times_{G/B}BS_T$ with the brick manifold $\fiber{w_0}$. If $R=(q_1,\ldots,q_{|R|})$, then then the points in $BS^R$ consist of lists of $m+1$ flags in $G/B$
\begin{displaymath} (F_0=B/B, F_1,\ldots,F_{|R|})\in BS^R\end{displaymath}
 such that the $k$-th flag agrees with the previous one except possibly on the subspace corresponding to $q_k$, and if $F_{k-1}=gB/B$ and $F_k=hB/B$, then $h^{-1}gB/B\in X^{q_k}$. Similarly, if $T=(q_1,\ldots,q_{|T|})$, then then the points in $BS_{S^{-1}}$ consist of lists of $m+1$ flags in $G/B$
\begin{displaymath} (E_0=w_0B/B, E_1,\ldots,E_{|T|})\in BS_T\end{displaymath}
 such that the $k$-th flag agrees with the previous one except possibly on the subspace corresponding to $q_k$, and if $E_{k-1}=gB/B$ and $E_k=hB/B$, then $h^{-1}gB/B\in X^{w_0^{-1}q_kw_0}$. Therefore, the fibered product $BS^R\times_{G/B}BS_T$ consists of the lists of flags of the form
\begin{displaymath} (F_0=B/B, F_1,\ldots,F_{|R|}=E_{|T|}, E_{|T|-1},\ldots,E_0=w_0B/B)\end{displaymath}
such that consecutive flags agree in the described way, together with the maps $\displaystyle{BS^R\lra{m_R}G/B}$ and $\displaystyle{BS_T\lra{m_T}G/B}$ that map the list of flags to $F_{|R|}=E_{|T|}$. This is precisely the brick manifold $\fiber{w_0}$.
\end{proof}

\bibliographystyle{plain}
\nocite*
\bibliography{bibliography}

\end{document}